\newtheorem{theorem}{Theorem}
\newtheorem{lemma}[theorem]{Lemma}
\newtheorem{cor}[theorem]{Corollary}
\newtheorem{prop}[theorem]{Proposition}
\theoremstyle{definition}
\newtheorem{defi}[theorem]{Definition}
\newtheorem{remark}[theorem]{Remark}
\def\min{\mathop{\mathrm{min}}}
\def\CC{\mathbb C}
\def\PP{\mathbb P}
\def\K{{\bf K}}
 \def\ord{\mathrm{ord}}
  \def\deg{\mathrm{deg\,}}
\def\p{\mathfrak p}
\def\noi{{\noindent}}
\def\gen{\mathfrak g}
\let\a\alpha
\let\d\delta
\let\e\epsilon
\let\f\phi
\let\t\tau
\let\D\Delta
\begin{document}
\title{Genus one factors of  curves defined by separated variable polynomials}
\author{Ta Thi Hoai An}
\address{Institute of Mathematics, Vietnam Academy of Science and Technology\\
18 Hoang Quoc Viet Road, Cau Giay District \\
10307 Hanoi,  Vietnam}
\email{tthan@math.ac.vn}
\author{Nguyen Thi Ngoc Diep}
\address{Department of Mathematics\\Vinh University\\Vietnam} \email{ngocdiepdhv@gmail.com}

\begin{abstract} We give some sufficient conditions
on complex
polynomials $P$ and $Q$ to assure that the algebraic plane curve $P(x)-Q(y)=0$
has no irreducible component of genus 0 or 1.
Moreover,
if $\deg (P)=\deg (Q)$ and  if both $P$, $Q$ satisfy Hypothesis I
introduced by H.~Fujimoto, our sufficient conditions are necessary.
\end{abstract}

\thanks{2000\ {\it Mathematics Subject Classification.} Primary  14H45
Secondary  14D41 11C08 12E10 11S80 30D25.}
\thanks{Financial support provided 
by  Vietnam's National Foundation for Science and Technology Development (NAFOSTED)}

\keywords{functional equations, diophantine equations, genus, reducibility.}
\baselineskip=16truept \maketitle \pagestyle{myheadings}
\markboth{Ta Thi Hoai An and Nguyen Ngoc Diep}{Genus one factors of  curves }

%%%%%%%%%%%%%%%%%%%%% ENDTOP MATTER %%%%%%%%%%%%%%%%%%

\section{Introduction }
Give two polynomials $P$ and $Q$  in one variable over a field  $\K $ of characteristic $p\ge 0$, two questions naturally arise:
First, function theorists have found it interesting to ask when
does the equation $P(f)=Q(g)$ have a nontrivial functional solution $(f,g)?$ 
Second, number theorists want to know whether there are finitely many or
infinitely many $\K$-rational solutions to the equation $P(x)=Q(y)$
when $\K$ is a number field, or possibly a global field of positive
characteristic. The two questions are related in certain cases
by theorems of  Faltings and  Picard: When $\K=\CC,$ Picard's theorem
says $P(f)=Q(g)$ has no solutions consisting of non-constant meromorphic
functions $f$ and $g$ when the plane curve $P(x)=Q(y)$ has no
irreducible components of (geometric) genus $0$ or $1.$ Similarly,
Faltings's Theorem says that if the plane curve $P(x)=Q(y)$ has
no irreducible components of (geometric) genus less than two,
then for each number field $\K$ over which $P$ and $Q$ are defined,
there are only finitely many $\K$-rational solutions to $P(x)=Q(y).$

When the degrees of $P$ and $Q$ are relative prime, one knows
by Ehrenfeucht's criterion (\cite{E}, \cite{Tv}) that the plane curve
$P(x)=Q(y)$ is irreducible.
In this case, Ritt's second theorem completely characterizes
when the curve has genus zero (see \cite[pp 40-41]{Sch}),
and Avanzi and Zannier in \cite{AZ1} completely characterize the case of
genus one. In \cite{Fr1}, Fried gave conditions such that the curve
has genus zero when $\gcd(\deg P,\deg Q)\le2$ and also for arbitrary
$d = \gcd(\deg P,\deg Q)$ provided the degrees of $P$ and $Q$  are
larger than some number $N(d) $ and  the curve is irreducible.
%%By using singularity theory and calculating the genus of algebraic curves
%%based on Newton polygons as their main tools,  H.~H.~Khoai and C.~C.~Yang 
%%in \cite{KY} gave some sufficient conditions on the degrees
%%of $P$ and $Q$ \textbf{-- sufficient to do what?}
Most of results of this type suppose
the irreducibility of the curve, however, when $\gcd(\deg P,\deg Q)> 1,$
the problem of determining the irreducibility of $P(x)-Q(y)$
remains wide open.

We consider now the case of the complex field $\CC.$
In some special cases and under the assumption that
$P$ is indecomposable (that is, $P$
cannot be written as a composition of two polynomials of degree larger than 1),
Tverberg determined in \cite[Ch. 2]{Tv} whether  
$[P(x)-P(y)]/(x-y)$ could contain a
linear or quadratic factor.
Similarly,  Bilu  \cite{B} determined all the pairs of polynomials 
such that $P(x)-Q(y)$ contains a quadratic factor.
Avanzi and Zannier in \cite{AZ} give a nice characterization of when a
curve of the form $P(x)=cP(y)$ has genus at least 1, where $c$ is a
nonzero complex constant.
In the case of polynomials  satisfying Fijimoto's hypothesis I, 
(i.e when restricted to the zero set of its derivative $P',$
the polynomial $P$ is injective), complete characterizations  for when
all the irreducible components of curve $P(x)-Q(y)=0$ have genus at
least 2 have been given in  \cite{AW}, \cite{AWW1}, \cite{CW}, \cite{Pa}  and also \cite{Fuj}.

In this paper, we will give some sufficient conditions that the plane curve
$P(x)=Q(y)$ has no irreducible component of (geometric) genus 0 or 1
for complex polynomials $P$ and $Q,$ not necessarily satisfying 
Fujimoto's hypothesis I.

Henceforth, all polynomials belong to  $\CC[X]$ and all curves we consider are defined in $\PP^2(\CC)$. We denote the coefficients of $P$ and $Q$ by
\begin{align}\label{P and Q}
&P(X)=a_0+a_1X+ \ldots +a_{n_0-1}X^{n_0-1}+a_{n_0}X^{n_0}+a_nX^n, \cr
&Q(X)=b_0+b_1X+ \ldots +b_{m_0-1}X^{m_0-1}+b_{m_0}X^{m_0}+b_mX^m,
\end{align}
where $a_n,$ $a_{n_0}$, $b_{m_0}$ and $b_m$ are non-zero.

Without loss of generality,
 throughout the paper we will assume that $n\geq m$. 

If one of the polynomials $P$ or $Q$ is linear, say $P(x)=ax+b,$  then  $( \frac 1a Q(f)-b,f)$ is a solution of the equation $P(x)=Q(y),$ where   $f$ is any non-constant meromorphic function. Hence, from now on, 
we always assume that both $P$ and $Q$ are not linear polynomials.

The first result is:

\begin{theorem}\label{th2} Let $m=n$ and $n\geq \max\{n_0,m_0\}+4.$
Suppose that $P(x)-Q(y)$ has no linear factor. Then the plane curve 
$P(x)=Q(y)$ has no irreducible component of genus 0 or 1.
\end{theorem}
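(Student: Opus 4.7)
The plan is to combine a leading-form analysis with a Riemann--Hurwitz computation on each irreducible component. Factor $P(x) - Q(y) = \prod_{j} F_j(x,y)$ in irreducibles of degrees $d_j$. Since $a_n, b_n \neq 0$, the top homogeneous form of $P - Q$ is $a_n x^n - b_n y^n = a_n \prod_{i=1}^{n}(x - \omega_i y)$ for $n$ distinct $\omega_i$ (the $n$-th roots of $b_n/a_n$), so the top form $G_j$ of each $F_j$ is a product of $d_j$ of these distinct linear forms.

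The first step is a degree dichotomy: every irreducible factor has degree $1$ or $\geq 4$. Write $F_j = G_j + H_j$ with $\deg H_j \leq h_j < d_j$, and match the second-highest homogeneous component in $\prod_{j}(G_j + H_j) = P - Q$; this yields $\min_{j}(d_j - h_j) \geq n - \max(n_0, m_0) \geq 4$. If $F_j$ is irreducible with $d_j \geq 2$, then $H_j \neq 0$ (otherwise $F_j = G_j$ would be a product of $\geq 2$ distinct linear forms, hence reducible), so $h_j \geq 0$ and $d_j \geq 4$. The no-linear-factor hypothesis rules out $d_j = 1$, so every component has $d_j \geq 4$.

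Next, I analyze each $C_j := \{F_j = 0\}$ at its $d_j$ points at infinity. In the chart $Y \neq 0$ of $\PP^2$ with local coordinates $(u, v) = (X/Y, Z/Y)$, the projective closure $\bar C_j$ near $(\omega_i, 0)$ is cut out by $G_j(u, 1) + v^{d_j - h_j} A(u, v) = 0$ with $A(\omega_i, 0) \neq 0$ and $\omega_i$ a simple root of $G_j(u, 1)$. Hence $\bar C_j$ is smooth at each of its $d_j$ points at infinity with $v$ a uniformizer, and both projections $x, y : \tilde C_j \to \PP^1$ are unramified over $\infty$.

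The main step, and the heart of the argument, is to conclude $g(\tilde C_j) \geq 2$ for each $d_j \geq 4$. Applying Riemann--Hurwitz to the degree-$d_j$ projection $x : \tilde C_j \to \PP^1$, the unramified structure at infinity gives $2 g(\tilde C_j) - 2 + 2 d_j = R^{\mathrm{fin}}$, and a direct count expresses $R^{\mathrm{fin}} = d_j(n-1) - \Delta_j$, where the main term comes from the expected ramification $\mu$ at each of the $d_j$ points of $C_j$ over each critical point $y_0$ of $Q$ with $\ord_{y_0}(Q') = \mu$, and the correction $\Delta_j \geq 0$ accounts for joint critical pairs of $(P, Q)$ on $C_j$. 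Hence $g(\tilde C_j) \geq 2$ reduces to the bound $\Delta_j \leq d_j(n-3) - 2$, which is the main obstacle. For $d_j = 4$ this bound is immediate: here $H_j$ is a nonzero constant $c$, and Euler's identity applied to the homogeneous $G_j$ of degree $4$ forces any affine singular point to satisfy $4 G_j(x_0, y_0) = 0$, contradicting $F_j(x_0, y_0) = G_j(x_0, y_0) + c = 0$ with $c \neq 0$; so $C_j$ is unconditionally smooth and $g(\tilde C_j) = 3$. For $d_j \geq 5$, Euler's identity applied to the non-homogeneous $H_j$ (of degree $\leq d_j - 4$) yields a necessary polynomial constraint $\sum_{i+j \leq d_j - 4}(i+j-d_j) c_{ij} x_0^i y_0^j = 0$ at each singular point of $C_j$; combining this with the symmetric Riemann--Hurwitz identity for the $y$-projection and with the fact that every other irreducible factor of $P - Q$ likewise has degree $\geq 4$ (which limits the number and multiplicity of cross-component intersection contributions to $\Delta_j$), one expects to force the required bound $\Delta_j \leq d_j(n-3) - 2$, and hence $g(\tilde C_j) \geq 2$.
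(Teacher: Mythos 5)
Your opening steps are sound and even attractive: the leading\hyphenation{form}-form factorization, the matching of the second\hyphenation{highest}-highest homogeneous component to get $\min_j(d_j-h_j)\ge n-\max\{n_0,m_0\}\ge 4$ (modulo one omitted point: you must rule out cancellation among the terms $\tilde H_j\prod_{k\ne j}G_k$, which does follow because the $G_j$ are pairwise coprime products of distinct linear forms), the smoothness of each component at infinity, and the complete treatment of the case $d_j=4$ via Euler's identity. But there is a genuine gap exactly where the difficulty of the theorem lives. For $d_j\ge 5$ you reduce everything to the bound $\Delta_j\le d_j(n-3)-2$ on the correction term coming from the finite singular points, and then write that one ``expects'' to force this bound; no argument is given, and none of the ingredients you list (Euler's identity applied to $H_j$, the symmetric Riemann--Hurwitz identity for the $y$-projection, the degree restriction on the other factors) visibly yields it. The finite singular points are the $(\alpha_i,\beta_j)$ with $P'(\alpha_i)=Q'(\beta_j)=0$ and $P(\alpha_i)=Q(\beta_j)$; when many such coincidences occur their delta-invariants can in principle absorb almost all of the arithmetic genus (compare the exceptional cases of Theorem~\ref{th3}), and your hypotheses enter nowhere in your control of them. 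Worse, bounding $\Delta_j$ \emph{component by component} requires knowing how these singular points distribute among the unknown factors $F_j$, which is precisely the irreducibility-type difficulty the paper's method is built to avoid.

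The paper's proof is much shorter and sidesteps the singularity count entirely: it exhibits the two $1$-forms $\omega_1=W(z_0,z_1)/z_2^2$ and $\omega_2=z_0W(z_0,z_1)/z_2^3$, which in the affine chart are $x\,dy-y\,dx$ and $x(x\,dy-y\,dx)$ and hence pull back to holomorphic forms on the normalization over every finite point, \emph{however singular the curve is there}; the hypothesis $n\ge\max\{n_0,m_0\}+4$ is used only to make $z_2^{3}$ divide $\partial F/\partial z_2$, so that the Key Lemma applies at infinity, where (since $n=m$, by Lemma~\ref{singular}) there are no singular points. Two $\CC$-linearly independent regular $1$-forms on each irreducible component then force genus at least $2$ with no Riemann--Hurwitz bookkeeping at all. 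If you want to salvage your route, the missing bound on $\Delta_j$ is not a technicality but the whole content of the theorem, and I would not expect it to be provable without, in effect, rediscovering these adjoint forms.
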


We will  denote  by
 $\alpha_1, \alpha_2, ..., \alpha_l$ and
 $\beta_1, \beta_2, ..., \beta_h$ the distinct roots of $P'(X)$ and $Q'(X)$,
 respectively. We will use $p_1, p_2, ..., p_l$
 and $q_1, q_2, ..., q_h$ to denote the multiplicities of the roots in
 $P'(X)$ and $Q'(X)$, respectively. Thus,
 $$P'(X)=na_n{(X-\alpha_1)}^{p_1}{(X-\alpha_2)}^{p_2}...{(X-\alpha_l)}^{p_l},$$
 $$Q'(X)=mb_m{(X-\beta_1)}^{q_1}{(X-\beta_2)}^{q_2}...{(X-\beta_h)}^{q_h}.$$

 The polynomial $P(X)$ is said to satisfy \textit{Hypothesis I} if
 $$P(\alpha_i) \neq P(\alpha_j)\ {\rm whenever}\ i\neq j, \  i, j = 1, 2, \ldots, l ,$$
 or in other words $P$ is injective on the roots of $P'$.

 In order to state the  theorems clearly, we need to introduce the
 following notation:

\medskip

\noi {\bf Notation.} \quad We put:
\begin{align*}&A_0:=\{(i, j)\mid 1\leq i\leq l, 1\leq j\leq h, \
P(\alpha_i)=Q(\beta_{j})\},\\
&A_1:=\{(i, j)\mid (i, j)\in A_0,\  p_i>q_j\},\\
&A_2:=\{(i, j)\mid (i, j)\in A_0 ,\
p_i<q_j\}.\end{align*}
  and we put $l_0:=\#A_0$.

The main results are as follows.

 \begin{theorem}\label{th1} Let $P(X)$ and $Q(X)$ satisfy Hypothesis I
and suppose $P(x)-Q(y)$ has no linear factor.
Then, if
$$\displaystyle\sum_{(i,j)\in {A_1}}(p_i-q_j)+\displaystyle\sum_{1\leq i\leq l, (i,j)\notin {A_0}}{p_i}\geq {n-m+3},$$
then the curve
$P(x)-Q(y)=0$ has no irreducible component of genus 0 or 1.
\end{theorem}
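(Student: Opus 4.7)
The plan is to apply the Riemann--Hurwitz formula to each irreducible component of $\{P(x)=Q(y)\}$ separately and force each individual geometric genus to be at least $2$. Let $C_1,\dots,C_r$ denote the irreducible components of the curve, $\widetilde C_s$ their normalizations, $g_s$ their geometric genera, and consider the projection $\pi_y\colon \widetilde C_s\to \mathbb{P}^1$ onto the $y$-coordinate, of degree $d_s$. Riemann--Hurwitz then reads
\[
2g_s-2=-2d_s+\deg R_s,
\]
with $R_s$ the ramification divisor of $\pi_y|_{\widetilde C_s}$, so $g_s\ge 2$ is equivalent to $\deg R_s\ge 2d_s+2$.

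Under Hypothesis~I on both $P$ and $Q$, the ramification of $\pi_y$ is localized at a controlled list of points, and the local equation of the curve at each such point reduces to a binomial $u^a=v^c$ up to a unit, which makes the branch structure explicit. Specifically, at $(\alpha_i,y_0)$ with $Q'(y_0)\ne 0$ there is one branch of index $p_i+1$, contributing $p_i$; at $(\alpha_i,\beta_j)$ for $(i,j)\in A_0$ there are $g_{ij}:=\gcd(p_i+1,q_j+1)$ branches of index $(p_i+1)/g_{ij}$, contributing $(p_i+1)-g_{ij}$; and at $(\infty,\infty)$ there are $d:=\gcd(n,m)$ branches of index $n/d$, contributing $n-d$. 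The no-linear-factor hypothesis together with $n\ge m$ excludes components with $d_s=1$: such a component would be the graph of a polynomial $f$ satisfying $P(f(y))\equiv Q(y)$, forcing $n\deg f=m$, hence $\deg f=1$, i.e.\ a linear factor of $P(x)-Q(y)$.

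The key algebraic inequality is $(p_i+1)-g_{ij}\ge p_i-q_j$ for $(i,j)\in A_1$ (from $g_{ij}\le q_j+1$), which ties the local ramification at common critical values directly to the term $\sum_{A_1}(p_i-q_j)$ in the hypothesis. Combined with the trivial bound $p_i$ at each non-critical preimage above $P(\alpha_i)$ (accounting for $m\sum_{i\notin A_0}p_i$ in the total) and the $n-d$ contribution at infinity, summing Riemann--Hurwitz over all components and using $\sum_s d_s=n$ and $\sum_s e_s=d$ (where $e_s$ denotes the number of branches of $\widetilde C_s$ at $(\infty,\infty)$) turns the hypothesis into an aggregate bound $\sum_s g_s\ge 2r$.

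The main obstacle is strengthening this aggregate bound to the per-component conclusion $g_s\ge 2$. The plan is to introduce, for each $s$, book-keeping variables $\mu_s(i,j)$ (branches of $\widetilde C_s$ over $(\alpha_i,\beta_j)$) and $\nu_s(i,y_0)$ (non-critical pre-images on $\widetilde C_s$ over $P(\alpha_i)$), subject to the global constraints $\sum_s\mu_s(i,j)=g_{ij}$, $\sum_s\nu_s(i,y_0)=1$, and $\sum_s e_s=d$; assume for contradiction some $g_s\le 1$; substitute the resulting expression for $\deg R_s$ into Riemann--Hurwitz; and play the hypothesis off against $d_s\ge 2$ to derive a contradiction. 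The delicacy is that the distribution of branches among components is not canonical, so the argument must either average across components or treat cases based on the sizes of the $\mu_s, \nu_s, e_s$. Hypothesis~I is essential here because it guarantees exactly one critical pre-image of each critical value on each side, decoupling the contributions at distinct common critical values and making the book-keeping tractable.
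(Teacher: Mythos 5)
Your proposal stalls exactly at the step that carries all the difficulty, and the ``plan'' you give for that step is not an argument. Summing Riemann--Hurwitz over the components only yields an aggregate inequality of the shape $\sum_s g_s\ge 2r$, which is compatible with one component having genus $0$ or $1$ and another absorbing the excess; to get the per-component conclusion $g_s\ge 2$ you must control how the ramification of $\pi_y$ distributes among the $C_s$, i.e.\ you must know which of the points $(\alpha_i,y_0)$, $(\alpha_i,\beta_{j})$ and which branches at infinity lie on which component. That distribution is governed by the factorization of $P(x)-Q(y)$, which (as the introduction of the paper stresses) is wide open when $\gcd(\deg P,\deg Q)>1$; the hypotheses of the theorem do not pin it down, and nothing in your book-keeping with $\mu_s,\nu_s,e_s$ and the constraints $\sum_s\mu_s(i,j)=g_{ij}$, $\sum_s\nu_s(i,y_0)=1$, $\sum_s e_s=d$ prevents an adversarial assignment in which some $C_s$ receives too little ramification. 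For instance, nothing a priori stops a low-degree component from meeting the critical lines $x=\alpha_i$ only at the singular points $(\alpha_i,\beta_{\tau(i)})$ with small local contribution, in which case $\deg R_s<2d_s+2$ and your contradiction never materializes. ``Assume $g_s\le 1$ and play the hypothesis off against $d_s\ge 2$'' names the goal, not a proof; the case analysis you defer is precisely where the theorem lives.

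The paper avoids this obstruction entirely by a different mechanism: it exhibits two explicit rational $1$-forms of Wronskian type, $\omega_1=z_0\,z_2^{\,e}\prod(z_1-\beta_{\tau(i)}z_2)^{q_{\tau(i)}}W(z_1,z_2)/\prod(z_0-\alpha_iz_2)^{p_i}$ and its companion $\omega_2$ without the factor $z_0$, whose denominators divide $\partial F/\partial z_0$ precisely because of the numerical hypothesis $\sum_{A_1}(p_i-q_j)+\sum_{(i,j)\notin A_0}p_i\ge n-m+3$, and which are checked to be regular at every singular point via local order estimates (Lemma~\ref{regular}). A globally regular $1$-form restricts to a regular $1$-form on \emph{every} irreducible component, and the no-linear-factor hypothesis makes $\omega_1,\omega_2$ independent on each component, so each component has genus at least $2$ with no knowledge of the factorization required. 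If you want to salvage your route, you would need to supply the missing per-component lower bound on ramification, which in effect amounts to reproving (a version of) what the $1$-forms give for free.
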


 \begin{cor}\label{cor1} With the same conditions as in Theorem~\ref{th1},
then $P(x)-Q(y)$ has no factor of genus 0 or 1 if the following holds
$$\displaystyle\sum_{(i,j)\in {A_2}}(q_j-p_i)+\displaystyle\sum_{1\leq j\leq h, (i,j)\notin {A_0}}{q_j}\geq 3.$$
\end{cor}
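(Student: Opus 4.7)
My plan is to deduce Corollary~\ref{cor1} directly from Theorem~\ref{th1} by showing that its hypothesis is equivalent to that of the theorem, once Hypothesis~I is exploited on both polynomials. First I would verify that under Hypothesis~I on $P$ and $Q$, each index $i$ lies in at most one pair of $A_0$ and likewise for each $j$: indeed, $(i,j),(i,j')\in A_0$ with $j\ne j'$ would force $Q(\beta_j)=P(\alpha_i)=Q(\beta_{j'})$, contradicting Hypothesis~I for $Q$, and the symmetric argument rules out two pairs $(i,j),(i',j)$ with $i\ne i'$. Setting $E:=\{(i,j)\in A_0\mid p_i=q_j\}$, this also gives the disjoint decomposition $A_0=A_1\sqcup A_2\sqcup E$.

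Next, writing $L$ and $R$ for the left-hand sides of the inequalities appearing in Theorem~\ref{th1} and Corollary~\ref{cor1} respectively, I would use $\sum_{i=1}^l p_i=\deg P'=n-1$ together with the bijectivity above to rewrite
$$\sum_{\substack{1\le i\le l\\(i,j)\notin A_0}} p_i \;=\; (n-1)-\sum_{(i,j)\in A_0} p_i.$$
Expanding $\sum_{(i,j)\in A_0} p_i$ along $A_0=A_1\sqcup A_2\sqcup E$ and collecting terms yields
$$L \;=\; (n-1) - \sum_{(i,j)\in A_1} q_j - \sum_{(i,j)\in A_2} p_i - \sum_{(i,j)\in E} p_i.$$
The parallel computation using $\sum_{j=1}^h q_j=\deg Q'=m-1$ gives
$$R \;=\; (m-1) - \sum_{(i,j)\in A_1} q_j - \sum_{(i,j)\in A_2} p_i - \sum_{(i,j)\in E} q_j.$$
Since $p_i=q_j$ on $E$, subtracting produces the identity $L-R = n-m$.

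Consequently, $R\geq 3$ (the Corollary hypothesis) is equivalent to $L\geq n-m+3$ (the Theorem hypothesis), and Theorem~\ref{th1} then gives the conclusion. There is no genuine obstacle in this argument; the entire content is the algebraic identity $L-R=n-m$, which amounts to careful bookkeeping. The one subtle point is that turning each ``outside'' sum into the complement of a sum over $A_0$ requires that every index $i$ (resp.\ $j$) appear at most once in $A_0$, and this is precisely where Hypothesis~I on \emph{both} polynomials is essential.
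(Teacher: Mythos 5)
Your argument is correct. The identity you establish is right: using Hypothesis~I on both $P$ and $Q$ (the content of Lemma~\ref{(i,j)}), each index occurs in at most one pair of $A_0$, so with $\sum_i p_i=n-1$, $\sum_j q_j=m-1$ and the decomposition $A_0=A_1\sqcup A_2\sqcup E$ one gets exactly $L-R=n-m$ (the $E$-terms cancel because $p_i=q_j$ there), hence $R\ge 3$ is \emph{equivalent} to the hypothesis $L\ge n-m+3$ of Theorem~\ref{th1}. The paper itself gives no written proof of Corollary~\ref{cor1}; the evidently intended route is the symmetric one, i.e.\ rerunning the construction of Theorem~\ref{th1} with the roles of $x$ and $y$ interchanged, building $1$-forms whose denominators divide $\partial F/\partial z_1=mb_mz_2^{n-m}\prod(z_1-\beta_jz_2)^{q_j}$ instead of $\partial F/\partial z_0$. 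That route has to be handled with some care because the standing convention $n\ge m$ is not symmetric (the extra factor $z_2^{n-m}$ in $\partial F/\partial z_1$ is what makes the threshold $3$ rather than $m-n+3$). Your approach sidesteps all of this: it uses Theorem~\ref{th1} as a black box and reduces the corollary to pure bookkeeping, and as a bonus it shows the two sufficient conditions are not merely related but literally equivalent, which the symmetric construction would not make apparent. Both arguments are valid; yours is the more economical and arguably the more informative.
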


When both the polynomials $P$ and $Q$ satisfy Hypothesis I and their
degrees are the same, we are able to give a sufficient and necessary
condition to assure that the curve  has no irreducible components of genus 0 or 1.

\begin{theorem}\label{th3} Let $P$ and $Q$ be polynomials satisfying Hypothesis I and $\deg P=\deg Q$. 
Then the curve
$P(x)-Q(y)$ has no factor of genus 0 or 1 if and only if
after possibly changing indices none of the following hold:
\begin{enumerate}
\item $P(x)-Q(y)$ has a linear factor.
\item $n=2$ or $n=3$.
\item $n=4$ and either there exists at least two $i$ such that $P(\alpha_i)=Q(\beta_i)$  or  there exists only one $i$ such that $P(\alpha_i)=Q(\beta_i)$ and $|p_i-q_i|=2$.
%\item $n=4$,  $P(\alpha_1)=Q(\beta_1)$ and either  $p_1=1, q_{\t(1)}=3$ or $p_1=3, q_{\t(1)}=1$
%\item $n=4$,  $P(\alpha_1)=Q(\beta_1)$,  $P(\alpha_2)=Q(\beta_2)$ and  $p_1=p_2=p_3=1$.
%\item $n=4$, $l_0=l=h=2$, $p_2=q_1=1$,$q_2=p_1= 2$, $P(\alpha_i)=Q(\beta_i)$, for $i=1,2$. 
 % \item  $n=4$, $l=h=3$, $p_i=q_i=1$ for any $i=1,2,3$ and  $P(\alpha_i)=Q(\beta_i)$ for any   $i=1,2,3$.
  \item either $n=p_1+1$, $l=1$, $h=2$, $p_1=q_1+1$, $q_2=1$ and 
$P(\alpha_1)=Q(\beta_1)$; or $n=p_1+2$, $h=1$, $l=2$, $q_1=p_1+1$, $p_2=1$ and
$P(\alpha_1)=Q(\beta_1)$.
  \item $l=h=2$, $p_2=q_2=1$, $p_1=q_1$, $n=p_1+2$, and $P(\alpha_1)=Q(\beta_1)$.
%\item $l_0=l=h=2$, $p_2=q_2=1$, $p_1=q_1$. (has linear factor)
\item $n=5$, $l_0=l=h=3$, $p_3=p_2=q_2=q_3=1$,$p_1=q_1= 2$, $P(\alpha_i)=Q(\beta_i)$, for $i=1,2,3$. 
\item $n=5$, $l_0=l=h=2$, $p_i=q_i=2$, $P(\alpha_i)=Q(\beta_i)$, for $i=1,2$.
\end{enumerate}
\end{theorem}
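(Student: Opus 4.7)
The plan is to prove the two directions separately. For the sufficiency I would invoke Theorem \ref{th1} and its Corollary \ref{cor1}; for the necessity I would exhibit an irreducible component of genus $0$ or $1$ in each of the eight exceptional configurations.

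For the sufficiency, since $n=m$ and both $P,Q$ satisfy Hypothesis I, I would first relabel so that $A_0=\{(1,1),\ldots,(l_0,l_0)\}$. Partitioning $\{1,\ldots,l_0\}$ according to the sign of $p_i-q_i$ and using the identity $\sum_{i=1}^{l}p_i=\sum_{j=1}^{h}q_j=n-1$, a short computation shows that the two sums appearing in Theorem \ref{th1} and Corollary \ref{cor1} coincide, say with common value $S$. Theorem \ref{th1} then applies whenever $S\geq 3$, so assuming (1) fails the task reduces to showing that $S\leq 2$ forces us into one of the cases (2)--(8). Because $S\in\{0,1,2\}$ this is a finite combinatorial case analysis on the multiplicity profile: $S=0$ gives $l=h=l_0$ with $p_i=q_i$ for all $i$ and a matched $A_0$; $S=1$ allows either one unpaired $\alpha_i$ of multiplicity $1$ or one matched pair with $p_i-q_i=1$; and $S=2$ splits into a few further sub-cases. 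Matching these profiles against the degree identity $\sum p_i=n-1$ (and handling $n\leq 3$ separately, since then $P'$ can have at most two roots) yields precisely the list (2)--(8).

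For the necessity, case (1) is immediate, and case (2) follows from a degree bound: a projective plane curve of degree $n\leq 3$ has arithmetic genus at most $1$, so every irreducible component has geometric genus at most $1$. For cases (3)--(8) I would apply Riemann--Hurwitz to the projection $\pi\colon C\to\PP^1$, $(x,y)\mapsto x$, of a putative component $C$. The ramification of $\pi$ is read off from the critical data: above the value $P(\alpha_i)=Q(\beta_{j(i)})$ the local structure of the curve is controlled by the pairing of exponents $(p_i+1)$ and $(q_{j(i)}+1)$ inherited from $A_0$. Summing the ramification contributions and using $\sum p_i=\sum q_j=n-1$ shows that $g(C)\leq 1$ whenever the predicted component actually exists; the existence is checked by a direct local analysis at the singularity $(\alpha_i,\beta_{j(i)})$ of $P(x)-Q(y)$, which after translation has normal form $u^{p_i+1}-c\,v^{q_{j(i)}+1}=0$ whose branches correspond to factors of the curve.

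The main obstacle will be the necessity direction for the more constrained configurations (5)--(8), where one must produce a genuine irreducible factor rather than merely predict a Hurwitz-consistent genus. I expect to handle these by putting $P$ and $Q$ into normalised form with prescribed critical values, recognising the resulting equation as a classical low-degree covering of $\PP^1$, and either writing the low-genus component down explicitly or exploiting a symmetry of the form $Q(y)=P(\sigma(y))$ for some affine $\sigma$, which forces a low-degree factor of $P(x)-Q(y)$ whose genus can then be computed directly from the Riemann--Hurwitz data above.
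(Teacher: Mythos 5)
Your necessity direction is broadly workable and not far from what the paper does (the paper uses the genus--degree formula with singularity multiplicities, Bezout-type irreducibility arguments, and a quadratic transformation for the non-ordinary singularity in case (3), rather than Riemann--Hurwitz on the projection, but these are interchangeable in spirit). Your observation that the two sums in Theorem~\ref{th1} and Corollary~\ref{cor1} coincide when $n=m$ is also correct.

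However, the sufficiency direction has a genuine gap. You claim that once Theorem~\ref{th1} is unavailable, i.e.\ once the common value $S\leq 2$, a finite combinatorial case analysis shows the configuration must be one of (2)--(8). This is false: the condition $S\leq 2$ describes infinitely many multiplicity profiles, almost all of which are \emph{not} in the exceptional list. For instance $l=h=l_0=3$, $p_i=q_i$ for all $i$ with $(p_1,p_2,p_3)=(3,2,1)$, $n=7$, gives $S=0$ and matches none of (2)--(8); the theorem asserts the curve then has no component of genus $0$ or $1$, and Theorem~\ref{th1}/Corollary~\ref{cor1} say nothing about it. (Note also that $S=0$ occurs both in genuinely exceptional cases, e.g.\ case (7), and in non-exceptional ones, so no argument based on $S$ alone can separate them.) The entire technical core of the paper's proof is devoted to exactly these leftover configurations: one must construct \emph{new} pairs of regular Wronskian-type $1$-forms whose numerators involve the lines $L_{i,j}$ through pairs of singular points $(\alpha_i,\beta_{\tau(i)},1)$, with regularity at the singular points controlled by the order estimates of Lemma~\ref{order} and Lemma~\ref{regular}(iii),(iv). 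This is carried out in Lemma~\ref{big2} and in the long subcase analysis of the proof of Theorem~\ref{th3} (the forms $\omega_{i,j}$ and $\gamma_{i,j}$), and it is precisely this construction that determines which low-degree/low-multiplicity profiles end up on the exceptional list. Without some substitute for it, your proposal does not prove the ``if'' direction.
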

A main technique to prove these results is constructing two non-trivial  regular 1-forms. This method helps to avoid a difficulty of proving irreducibility of the curve.

 {\it Acknowledgments.} A part of this article was written while the first name
author was visiting  Vietnam Institute for Advanced Study in Mathematics.  She would like to thank the Institute for warm hospitality and partial support.

\section{A Key Lemma }

We first recall some notation (for more detail, see \cite[Section 2]{AWW1}). 

Let $F(z_0,z_1,z_2)$ be a homogeneous polynomial of degree $n$
and let $$C = \{[z_0,z_1,z_2] \in {\PP}^2({\CC}) \mid F(z_0,z_1,z_2) = 0\}.$$
By Euler's theorem, for $[z_0,z_1,z_2]\in C$, we have
\begin{align}\label{4}
z_0\frac{\partial F}{\partial z_0}+z_1\frac{\partial F}{\partial z_1}+z_2\frac{\partial F}{\partial z_2} =0.
\end{align}
The equation of the tangent space of $C$ at the point $[z_0,z_1,z_2]\in C$ is
defined by
\begin{align}\label{5}
dx\frac{\partial F}{\partial z_0}+dy\frac{\partial F}{\partial z_1}+dz\frac{\partial F}{\partial z_2} =0.
\end{align}
Then by Cramer's rule, on the curve $C$ we have
\begin{align*}
\frac{\partial F}{\partial z_0}=\frac{W(z_1,z_2)}{W(z_0,z_1)}\frac{\partial F}{\partial z_2},\quad 
\frac{\partial F}{\partial z_1}=\frac{W(z_2,z_0)}{W(z_0,z_1)}\frac{\partial F}{\partial z_2},
\end{align*}
where $W(z_i,z_j)$ denotes the Wronskian of $z_i$ and $z_j$, as
$$W(z_i,z_j):=\begin{vmatrix}z_i&z_j\\dz_i&dz_j
\end{vmatrix}.$$
Therefore
\begin{align}\label{6}\frac{W(z_1,z_2)}{\displaystyle \frac{\partial F}{\partial z_0}}
=\frac{W(z_2,z_0)}{\displaystyle \frac{\partial F}{\partial z_1}}=\frac{W(z_0,z_1)}{\displaystyle \frac{\partial F}{\partial z_2}}.\end{align}

\begin{defi}  Let $C \subset {\PP}^2({\CC})$ be an algebraic curve. A $1$-form $\omega$ on $C$ is said to
be {\it regular} if it is the restriction (more precisely, the
pull-back) of a rational $1$-form on ${\PP}^2({\CC})$ such
that the pole set of $\omega$ does not intersect $C$. A $1$-form is
said to be of {\it Wronskian type} if it is of the form $\displaystyle \frac{R}{S} W(z_i, z_j)$ for some homogeneous polynomials $R$ and $S$ such
that deg $S$ = deg $R + 2.$
\end{defi}

Note that the condition in the above definition 
ensures a well-defined rational 1-form on ${\PP}^2({\CC})$ since
 \begin{align*}\frac{R}{S} W(z_i, z_j) = \frac{z_j^2R}{S}
\frac{W(z_i, z_j)}{z_j^2}.\end{align*} 

A  holomorphic map
\begin{align*}
\f=(\f_0,\f_1,\f_2) : \D_{\e} = \{t \in {\CC} \mid |t| < \e\}
\to C,\quad \varphi(0) = {\mathfrak p} \end{align*}
 is
referred to as a  {\it holomorphic parameterization of $C$ at
${\mathfrak p}$}. A local holomorphic parameterization exists for
sufficiently small $\epsilon$. A rational function $Q$ on the
curve $C$ is represented by $A/B$ where $A$ and $B$ are
homogeneous polynomials in $z_0,z_1,z_2$ such that $B|_C$ is not
identically zero. Thus $Q \circ \phi$ is a well-defined
meromorphic function on $\D_{\e}$ with Laurent expansion
$$Q \circ \phi(t) = \sum_{i = m}^{\infty} a_i t^i, \qquad a_m
\ne 0.$$ The order of $Q \circ \phi$ at $t = 0$ is by definition
$m$ and shall be denoted by 
\begin{align}\ord_{\mathfrak p,
\f} Q = \ord_{t=0} Q(\f(t)).
\end{align} The
function $Q \circ \phi$ is holomorphic if and only if $m \ge 0$.
The rational function $Q$ is regular at ${\mathfrak p}$ if and only if
$Q \circ \phi$ is holomorphic for  all local holomorphic
parameterizations of $C$ at ${\mathfrak p}$. From now, we write $\ord_{\mathfrak p} Q $ instead  of $\ord_{\mathfrak p,
\f} Q $ for some holomorphic parameterization of $C$.

\begin{lemma}[Key Lemma] Let $C$ be a projective curve of degree $n$ in $\PP^2(\CC)$ defined by $F(z_0,z_1,z_2)=0$. Assume that  there is $i\ne j\ne k \in \{0,1,2\}$ and  two well-defined rational  1-forms of Wronskian type
$$\omega_1=\frac{R_1}{S_1} W(z_i, z_j),\quad\text{ and } \omega_2=\frac{R_2}{S_2} W(z_i, z_j)$$
which satisfy the following
\begin{itemize}\item[(i)]   $S_1,S_2$ are factors of ${\displaystyle \frac{\partial F}{\partial z_k}}$. 
\item[(ii)]  $\omega_1$ and $\omega_2$ are $\CC$-linearly independent
on any irreducible component  of the curve $C.$ 
\item[(iii)]  For $i=1,2,$  $\omega_i$ is
regular at every $\mathfrak p\in \mathcal{S}\cap \mathcal{S}_i$,
where $\mathcal S$ is the set of singular points of $C$ and
$ \mathcal{S}_i$ is the zero set of $S_i$, 
\end{itemize}
Then every irreducible component of the curve has genus at least 2.
\end{lemma}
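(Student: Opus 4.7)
The plan is to show that $\omega_1$ and $\omega_2$, after pullback to the desingularization of any irreducible component of $C$, become two $\CC$-linearly independent holomorphic $1$-forms; once this is established, each desingularized component has $\dim_\CC H^0(\widetilde{C_\alpha}, \Omega^1) \ge 2$, whence its genus is at least $2$.

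First I would recast each $\omega_\ell$ ($\ell = 1, 2$) as an adjoint form on $C$. Hypothesis (i) lets us write $\partial F/\partial z_k = S_\ell T_\ell$ for a homogeneous polynomial $T_\ell$ of degree $n - 1 - \deg S_\ell$. Plugging this factorization into (6) with $k$ the third index gives, on $C$, the identity $\omega_\ell = R_\ell T_\ell \cdot \eta$, where $\eta$ denotes the common value of the three ratios in (6). A degree check using $\deg S_\ell = \deg R_\ell + 2$ shows that $h_\ell := R_\ell T_\ell$ has degree $n - 3$; thus on $C$ we have $\omega_\ell = h_\ell\,\eta$, the classical shape of an adjoint $1$-form on a plane curve of degree $n$.

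Next I would verify regularity of each $\omega_\ell$ on the normalization $\widetilde{C} \to C$. At a smooth point $\mathfrak p$ of $C$, some $\partial F/\partial z_m$ is nonzero, so $\eta$ expressed via (6) with that denominator is regular at $\mathfrak p$, whence $h_\ell \,\eta$ is too. At a singular point $\mathfrak p \in \mathcal S$ I split into two cases. If $\mathfrak p \in \mathcal S \cap \mathcal S_\ell$, hypothesis (iii) directly gives regularity, and the pullback of a regular form is holomorphic on $\widetilde C$ over $\mathfrak p$. If instead $\mathfrak p \in \mathcal S \setminus \mathcal S_\ell$, then $S_\ell(\mathfrak p) \ne 0$, so $R_\ell/S_\ell$ is holomorphic near $\mathfrak p$ and $\omega_\ell$ has no pole at $\mathfrak p$ in $\PP^2(\CC)$; its pullback to $\widetilde C$ is then automatically holomorphic over every preimage of $\mathfrak p$.

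Putting these pieces together, on each irreducible component $C_\alpha$ the pullbacks of $\omega_1$ and $\omega_2$ to $\widetilde{C_\alpha}$ are everywhere holomorphic, and by hypothesis (ii) they remain $\CC$-linearly independent, forcing $g(\widetilde{C_\alpha}) \ge 2$. The main subtlety I expect is the regularity check at smooth points of $C$ that happen to lie on $\mathcal S_\ell$: there the naive expression $(R_\ell/S_\ell)W(z_i, z_j)$ has an apparent pole, and visibly cancelling it requires precisely the divisibility $S_\ell \mid \partial F/\partial z_k$ combined with (6); the remaining cases amount to careful bookkeeping with the identity (6) and the pullback definition of holomorphy on the normalization.
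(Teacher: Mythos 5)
Your proposal is correct and follows essentially the same route as the paper's own proof: it factors $\partial F/\partial z_k = S_\ell T_\ell$ via hypothesis (i), uses the identity (\ref{6}) to rewrite $\omega_\ell$ with whichever partial derivative is nonvanishing, so that the only possible poles lie in $\mathcal S\cap\mathcal S_\ell$, and then invokes (iii) there. The explicit adjoint-form degree count and the passage to the normalization are just slightly more detailed packaging of the same argument.
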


\begin{proof}The rational 1-form $\omega_1$ has possible poles at $(z_0,z_1,z_2)\in\PP^2$ such that $S_1(z_0,z_1,z_2)=0.$ 
By the hypothesis that $S_1$ is a factor of ${\displaystyle \frac{\partial F}{\partial z_k}}$, we can write
$$\frac{\partial F}{\partial z_k}=S_1H_1,
$$
which implies
\begin{align*}\omega_1&=\frac{R_1H_1 W(z_i, z_j)}{S_1H_1}=\frac{R_1H_1 W(z_i, z_j)}{\displaystyle \frac{\partial F}{\partial z_k}}.\end{align*}
Together with (\ref{6}), we have
\begin{align*}\omega_1=\frac{R_1H_1W(z_1,z_2)}{\displaystyle \frac{\partial F}{\partial z_0}}
=\frac{R_1H_1W(z_2,z_0)}{\displaystyle \frac{\partial F}{\partial z_1}}=\frac{R_1H_1W(z_0,z_1)}{\displaystyle \frac{\partial F}{\partial z_2}}.\end{align*}
Hence, $\omega_1$ only has a possible pole at $(z_0,z_1,z_2)\in \mathcal{S}\cap \mathcal{S}_i$, which  is impossible  by the condition (iii). Therefore,  $\omega_1$ is regular on the curve $C$.

Similarly,  $\omega_2$ is regular on the curve $C$.

Together with the condition (ii), on the curve $C$, there are two
regular 1-forms which are independent each irreducible component.
So, they have genus at least 2.
\end{proof}

\begin{remark} %1. It is easy to see from the  proof that the Key Lemma holds if one can find  a polynomial of degree less than or equal $n-4$ which satisfies the condition on $\ord$ in Key Lemma. 

  Now, let $F(z_0,z_1,z_2)$ be the homogeneous polynomial of degree $n$
obtained by homogenizing $P(x)-Q(y),$ and let $C$ be the curve defined
by $F(z_0,z_1,z_2)=0$ in $\PP^2$. Obviously, the equation 
$P(x)=Q(y)$ has no non-constant meromorphic solution if and only
if the curve $C$ is Brody hyperbolic, meaning
there are no non-constant holomorphic maps from $\CC$ into $C$.
By Picard's theorem, this is equivalent to every irreducible component of
the curve having genus at least 2. Therefore, if the Key lemma holds, then
the equation $P(x)=Q(y)$ has no non-constant meromorphic functions solutions.
\end{remark}

\section{Proof of Theorems \ref{th2}-\ref{th3}}

Recall
 \begin{align*}
&P(x)=a_0+a_1x+ \ldots +a_{n_0-1}x^{n_0-1}+a_{n_0}x^{n_0}+a_nx^n, \cr
&Q(x)=b_0+b_1x+ \ldots +b_{m_0-1}x^{m_0-1}+b_{m_0}x^{m_0}+b_mx^m,
\end{align*}
where $a_n,$ $a_{n_0}$, $b_{m_0}$ and $b_m$ are non-zero and their derivatives are expressed in the forms
$$P'(x)=na_n{(x-\alpha_1)}^{p_1}{(x-\alpha_2)}^{p_2}...{(x-\alpha_l)}^{p_l},$$
 $$Q'(x)=mb_m{(x-\beta_1)}^{q_1}{(x-\beta_2)}^{q_2}...{(x-\beta_h)}^{q_h}.$$
Recall also that we are assuming $n\ge m$.

As in the remark at the end of the last section,
let $F(z_0,z_1,z_2)$ be the homogeneous polynomial of degree $n$
obtained by homogenizing $P(x)-Q(y),$ and let $C$ be the curve defined by
$F(z_0,z_1,z_2)=0$ in $\PP^2$. 

Denote by $P'(z_0,z_2)$ and $Q'(z_1,z_2)$ the homogenization of the
polynomials $P'(x)$ and $Q'(y)$ respectively.
Hence
\begin{align*}
&\frac{\partial F}{\partial z_0}=P'(z_0,z_2)=na_n{(z_0-\alpha_1z_2)}^{p_1}\dots(z_0-\alpha_lz_2)^{p_l},\cr
&\frac{\partial F}{\partial z_1}=z_2^{n-m}Q'(z_1,z_2)=mb_mz_2^{n-m}{(z_1-\beta_1z_2)}^{q_1}\dots(z_1-\beta_hz_2)^{q_h},\cr
&\frac{\partial F}{\partial z_2}=z_2^{n-m'-1}[sz_2^{m'-n_0}z_0^{n_0}+tz_2^{m'-m''}z_1^{m''}+z_2E(z_0,z_1,z_2)]
\end{align*}
where $s$ and $t$ are  constants such that $st\ne0$,  
$E(z_0,z_1,z_2)$ is a homogeneous
polynomial of degree $m'-1$ which can be calculated explicitly and 
depend only  on $P$ and $Q$, and
\begin{align*}&m'=\max\{n_0,m_0\} \text{  if $ n=m$ and $m'= 
\max\{n_0,m\}$  if $ n>m,$}\\
& m''=m_0  \text{   if $n=m$ and $m''=m $ if $ n>m$.}\end{align*}

\begin{lemma}[{\cite[Lemma 4]{AE}}]\label{singular}
The only possible singular points of the projective curve $C$ are $(0:1:0)$ and the $(\alpha_i:\beta_j:1)$ such that $P(\alpha_i)=Q(\beta_j)$, for
$1\leq i \leq l$ and $1\leq j\leq h.$ Moreover, if $n=m$ 
then the curve has no singularity at infinity.
\end{lemma}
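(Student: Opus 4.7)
The plan is to identify the singular points of $C$ by solving the system $F = \partial F/\partial z_0 = \partial F/\partial z_1 = \partial F/\partial z_2 = 0$ on $\PP^2(\CC)$, splitting the analysis into the affine chart $z_2 \ne 0$ and the line at infinity $z_2 = 0$, and then invoking the factorizations of the partials recorded just above.

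In the affine chart, I would normalize $z_2 = 1$ so a point reads $(x,y,1)$. From the displayed formulas, $\partial F/\partial z_0(x,y,1) = P'(x)$ and $\partial F/\partial z_1(x,y,1) = -Q'(y)$, so the simultaneous vanishing of the first two partials forces $x = \alpha_i$ and $y = \beta_j$ for some indices $i,j$ via the explicit factored forms of $P'$ and $Q'$. The curve equation $F(\alpha_i,\beta_j,1) = P(\alpha_i) - Q(\beta_j) = 0$ then gives precisely the condition $P(\alpha_i) = Q(\beta_j)$. The third partial $\partial F/\partial z_2$ vanishes automatically at any such point by Euler's identity \eqref{4}, since once the $z_0$- and $z_1$-partials vanish and $z_2 \ne 0$ the identity forces the $z_2$-partial to vanish as well. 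This exhausts the affine candidates.

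To handle the line at infinity I would set $z_2 = 0$ and observe that only the top-degree monomials of $P(x) - Q(y)$ contribute: $F(z_0,z_1,0) = a_n z_0^n$ if $n > m$, and $F(z_0,z_1,0) = a_n z_0^n - b_n z_1^n$ if $n = m$. In the first case the curve meets the line at infinity only at the point $(0:1:0)$, giving the stated candidate. In the second case the $n$ common zeros are the distinct points $(\xi_k : 1 : 0)$ with $\xi_k^n = b_n/a_n \ne 0$, so in particular $\xi_k \ne 0$, and then
\begin{align*}
\frac{\partial F}{\partial z_0}(\xi_k,1,0) \;=\; na_n\,\xi_k^{\,n-1} \;\ne\; 0,
\end{align*}
which certifies smoothness. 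This proves the second assertion that $C$ has no singularity at infinity when $n = m$.

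The argument is essentially a direct computation with the gradient of $F$, so no serious obstacle arises. The only mild care needed is in the case $n = m$ at infinity, where one must verify that every root of $a_n z_0^n - b_n z_1^n$ on the line $z_2 = 0$ has $z_0 \ne 0$, so that the formula for $\partial F/\partial z_0$ actually yields a nonzero value and the smoothness conclusion genuinely applies.
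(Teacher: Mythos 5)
Your proposal is correct and follows essentially the same route as the paper: a direct computation with the gradient of $F$, treating the affine chart and the line at infinity separately, and using the factored forms of $\partial F/\partial z_0$ and $\partial F/\partial z_1$ to pin down the candidates (the paper phrases the case at infinity by deducing $a_1=0$, and when $n=m$ also $a_2=0$, directly from the vanishing partials rather than by first intersecting $C$ with $\{z_2=0\}$, but the computation is the same). Your extra remarks — invoking Euler's identity for the third partial in the affine chart and checking $\xi_k\ne 0$ at infinity — are sound and only make the argument more explicit.
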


\begin{proof}
   Suppose  $a=(a_1,a_2,a_3)$ is a
  singularity, hence  
$${\frac{\partial F}{ \partial 
z_0}(a)=\frac{\partial F}{\partial z_1}(a)=\frac{\partial F}{\partial 
z_2}(a)=0}.$$

   If $a_3=1$ then ${\frac{\partial F}{\partial z_0}(a)=\frac{\partial 
F}{\partial z_1}(a)=0}$ and $P(a_1)=Q(a_2)$. Hence   $a_1=\alpha_i$
and $a_2=\beta_j$ and $P(\alpha_i)=Q(\beta_j)$, for $1\leq i\leq l$ and $1\leq j\leq h$. 

If $a_3=0$
then $\frac{\partial F}{\partial z_0}(a)=na_1^{n-1}=0$ hence   $a_1=0$.

 Now, 
if  $a=(a_1,a_2,0)$ is a
  singularity at infinity and  $n=m$,   then  $\frac{\partial F}{\partial z_0}(a)=na_na_1^{n-1}=0$ and  $\frac{\partial F}{\partial z_1}(a)=nb_na_2^{n-1}=0$ hence
$a_1=a_2=0$,  which is impossible.
  Therefore the curve has no singularity at infinity when $n=m$.\end{proof}

\subsection{Proof of Theorem \ref{th2}}

In Theorem \ref{th2} we consider $P(x)$ and $Q(x)$ to be polynomials of the same degrees. Hence
\begin{align*}
&\frac{\partial F}{\partial z_0}=P'(z_0,z_2)=na_n{(z_0-\alpha_1z_2)}^{p_1}\dots(z_0-\alpha_lz_2)^{p_l},\cr
&\frac{\partial F}{\partial z_1}=Q'(z_1,z_2)=mb_m{(z_1-\beta_1z_2)}^{q_1}\dots(z_1-\beta_hz_2)^{q_h},\cr
&\frac{\partial F}{\partial z_2}=z_2^{n-m'-1}[sz_2^{m'-n_0}z_0^{n_0}+tz_2^{m'-m_0}z_1^{m_0}+z_2E(z_0,z_1,z_2)]
\end{align*}
where  $m'=\max\{n_0,m_0\}$.

\begin{proof}[{Proof of Theorem \ref{th2}.}]
Consider
\begin{align*}\omega_1:=\frac{W(z_0,z_1)}{z_2^{2}},\quad  \text{ and } \omega_2:=\frac{z_0W(z_0,z_1)}{z_2^{3}}.
\end{align*}
They are well-defined rational  1-forms of Wronskian type
and have a possible pole at infinity (i.e at $z_2=0$).  By Lemma~\ref{singular}, when $m=n$ the curve has no singularity at infinity. It follows 
that $\omega_1$ and $\omega_2$ are regular at every singular point. 
It is easy to see from  the hypothesis that $P(x)-Q(y)$ has no linear factor
and that $\omega_1$ and $\omega_2$ are $\CC$-linearly independent on any
irreducible component  of the curve $C.$ 
However, by the hypothesis $n\ge m':=\max\{n_0,m_0\}+4,$ their denominators are factors of $z_2^{n-m'-1}$ and hence also of  $\displaystyle \frac{\partial F}{\partial z_2}$. 
Therefore by the Key Lemma, every irreducible component of the curve $C$ has genus at least 2,
and hence the equation $P(f)=Q(g)$ has no non-constant meromorphic
function solutions.
\end{proof}

\subsection{Proof of Theorem \ref{th1}}

From here, we always assume that the polynomials $P$ and $Q$
satisfy  hypothesis I.

In the proof of Theorem \ref{th1}, we will need the following lemmas.
First, when the polynomials $P$ and $Q$ satisfy
 hypothesis I,
we will give an upper bound on the cardinality of $A_0$.

\begin{lemma}\label{(i,j)}
Let $P(X)$ and $Q(X)$  satisfy Hypothesis I. Then for each $i$,
  $1\leq i\leq l$,   there exists at most one $j$, $1\leq j\leq h$,  such
that $P(\alpha_i)=Q(\beta_j)$.
  Moreover, $l_0\leq \min\{l, h\}.$\end{lemma}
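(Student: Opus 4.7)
The plan is essentially a two-line argument that uses Hypothesis I applied separately to $P$ and to $Q$, together with a simple counting/projection argument.

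First I would prove the main claim (for each fixed $i$, at most one $j$ satisfies $P(\alpha_i)=Q(\beta_j)$) by direct contradiction. Fix $i$ and suppose, toward a contradiction, that there are indices $j\ne j'$ in $\{1,\dots,h\}$ such that $P(\alpha_i)=Q(\beta_j)$ and $P(\alpha_i)=Q(\beta_{j'})$ simultaneously. Then $Q(\beta_j)=Q(\beta_{j'})$, which is forbidden by Hypothesis I applied to $Q$ (namely, $Q$ is injective on the distinct roots $\beta_1,\dots,\beta_h$ of $Q'$). By symmetry, the analogous statement holds with the roles of $P$ and $Q$ swapped: for each fixed $j$, there is at most one $i$ with $P(\alpha_i)=Q(\beta_j)$, using Hypothesis I for $P$.

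Then I would deduce $l_0\le\min\{l,h\}$ by noting that both coordinate projections
\begin{align*}
\pi_1: A_0\longrightarrow \{1,\dots,l\},\quad (i,j)\mapsto i, \qquad
\pi_2: A_0\longrightarrow \{1,\dots,h\},\quad (i,j)\mapsto j,
\end{align*}
are injective; injectivity of $\pi_1$ is exactly the first assertion, while injectivity of $\pi_2$ is its symmetric counterpart. Hence $l_0=\#A_0\le l$ and $l_0\le h$, giving $l_0\le\min\{l,h\}$.

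There is no real obstacle here; the lemma is a direct translation of the injectivity built into Hypothesis I. The only thing to be slightly careful about is to invoke Hypothesis I for the correct polynomial in each half of the argument ($Q$ for the first projection, $P$ for the second), and to keep the fact that the $\alpha_i$ are pairwise distinct (and similarly for the $\beta_j$) in play, since this is what allows Hypothesis I to be phrased as an injectivity statement rather than a weaker multiplicity condition.
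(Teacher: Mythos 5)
Your argument is correct and is essentially identical to the paper's own proof: both derive $Q(\beta_j)=Q(\beta_{j'})$ and invoke Hypothesis I for $Q$, then appeal to symmetry for the other projection, and the bound $l_0\le\min\{l,h\}$ follows from the injectivity of the two coordinate projections exactly as you describe. No gaps.
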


\begin{proof} For each $i, (1\leq i\leq l)$, assume that there exist
$j_1, j_2,\   1\leq j_1, j_2\leq h$,  such that
$P(\alpha_i)=Q(\beta_{j_1})$ and $P(\alpha_i)=Q(\beta_{j_2})$. This
implies that $Q(\beta_{j_1})=Q(\beta_{j_2})$  and hence
$j_1=j_2$ because  $Q$ satisfies  Hypothesis I. Similarly, there
exists at most one $i, (1\leq i\leq l)$ such that
$P(\alpha_i)=Q(\beta_{j})$
  for each $j, (1\leq i\leq h)$. This ends the proof of  Lemma \ref{(i,j)}.
\end{proof}

Recall that we have set:
\begin{align*}&A_0:=\{(i, j)\mid 1\leq i\leq l, 1\leq j\leq h, \
P(\alpha_i)=Q(\beta_{j})\},\\
&A_1:=\{(i, j)\mid (i, j)\in A_0,\  p_i>q_j\},\\
&A_2:=\{(i, j)\mid (i, j)\in A_0 ,\
p_i<q_j\},\end{align*}
  and we put $l_0:=\#A_0$.

By Lemma \ref{(i,j)},
without loss of generality we may assume that 
\begin{align*}&A_0=\{(1,\t(1)), \ldots, (l_0,\t(l_0))\};\\
&A_1=\{(1,\t(1)), \ldots, (l_1,\t(l_1))\},
\end{align*}
which we do from now on.
In what follows, let $L_{i,j}, 1 \le i \ne j \le l_0$, be the linear
form associated to the line passing through the two points
$(\alpha_i,\beta_{\t(i)},1)$ and $(\alpha_j,\beta_{\t(j)},1).$
Note that $L_{i,j}$ is defined by
\begin{align*}
L_{i,j} &:=(z_1-\beta_{\t(j)}z_2)-\frac{ \beta_{\t(i)}-\beta_{\t(j)}}{\a_i-\a_j}
(z_0-\a_j z_2)\\
&=(z_1-\beta_{\t(i)}z_2)-\frac{
\beta_{\t(i)}-\beta_{\t(j)}}{\a_i-\a_j} (z_0-\a_i z_1).\end{align*}

\begin{lemma}\label{order} Let
$\mathfrak p_i = (\a_i, \beta_{\t(i)}, 1) \in C,$ $i=1,\dots, l_0$.
\begin{itemize}\item[(i)]  Assume that $L_{i,j}$, $1 \le i \ne j \le l_0,$ is
not identically zero on any component of $C$. Then,
\begin{align*}
\ord_{\mathfrak p_i}  L_{i,j} \ge \min\{ \ord_{\mathfrak p_i}
(z_0-\a_iz_2), \ord_{\mathfrak p_i} (z_1-\beta_{\t(i)}z_2)\},
\end{align*} 
 for each local
parameterization at $\mathfrak p_i$
and for each
local parameterization  at $\mathfrak p_j$
\begin{align*}
\ord_{\mathfrak p_j}  L_{i,j} &\ge \min\{ \ord_{\mathfrak p_j}
(z_0-\a_jz_2), \ord_{\mathfrak p_j} (z_1-\beta_{\t(j)}z_2)\}.
\end{align*}
\item[(ii)] $
(p_i+1)\,\ord_{\mathfrak p_i} (z_0-\a_iz_2)= (q_{\t(i)}+1)\,
\ord_{\mathfrak p_i} (z_1-\beta_{\t(i)}z_2).$
\item[(iii)] $ \ord_{\mathfrak p_i} W(z_1,z_2)\ge \ord_{\mathfrak p_i} (z_1-\beta_{\t(j)}z_2) -1.$
\end{itemize}
\end{lemma}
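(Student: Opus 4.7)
My plan is to work in the affine chart $\{z_2 \ne 0\}$, where $\mathfrak{p}_i$ has coordinates $(\alpha_i, \beta_{\tau(i)})$, and to use a local holomorphic parameterization $\varphi(t) = (x(t), y(t), 1)$ of a branch of $C$ at $\mathfrak{p}_i$ with $x(0) = \alpha_i$ and $y(0) = \beta_{\tau(i)}$. All three items are essentially local computations that I would handle in the stated order.

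For (i), I would observe that the second expression given for $L_{i,j}$ exhibits it as an affine combination $L_{i,j} = (z_1 - \beta_{\tau(i)} z_2) - \lambda (z_0 - \alpha_i z_2)$ with $\lambda = \frac{\beta_{\tau(i)} - \beta_{\tau(j)}}{\alpha_i - \alpha_j}$, whose two summands both vanish at $\mathfrak{p}_i$. The elementary inequality $\ord(f_1 + f_2) \geq \min\{\ord f_1, \ord f_2\}$ then immediately yields the bound at $\mathfrak{p}_i$; the symmetric use of the first form of $L_{i,j}$ (the one centered at $\mathfrak{p}_j$) gives the bound at $\mathfrak{p}_j$.

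For (ii), because $P'$ has $\alpha_i$ as a zero of exact multiplicity $p_i$ and $Q'$ has $\beta_{\tau(i)}$ as a zero of exact multiplicity $q_{\tau(i)}$, and because $P(\alpha_i) = Q(\beta_{\tau(i)})$, Taylor expansion yields $P(x) - Q(y) = c_1 (x - \alpha_i)^{p_i+1} - c_2 (y - \beta_{\tau(i)})^{q_{\tau(i)}+1} + (\text{higher order})$ with $c_1, c_2 \ne 0$. Evaluating this identity along $\varphi$ and using $P(x(t)) - Q(y(t)) \equiv 0$, the two leading contributions---of orders $(p_i+1) a$ and $(q_{\tau(i)}+1) b$, where $a = \ord_{t=0}(x(t) - \alpha_i)$ and $b = \ord_{t=0}(y(t) - \beta_{\tau(i)})$---must cancel. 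The no-linear-factor hypothesis rules out $x(t) \equiv \alpha_i$ or $y(t) \equiv \beta_{\tau(i)}$, so $a, b$ are finite positive integers, and the required cancellation forces $(p_i+1)a = (q_{\tau(i)}+1)b$, which is exactly (ii) after noting that $z_2(t) \equiv 1$ makes $a = \ord_{\mathfrak{p}_i}(z_0 - \alpha_i z_2)$ and $b = \ord_{\mathfrak{p}_i}(z_1 - \beta_{\tau(i)} z_2)$. For (iii), a direct pullback along $\varphi$ gives $W(z_1, z_2)|_\varphi = (z_1 dz_2 - z_2 dz_1)|_\varphi = -y'(t)\,dt$; combined with $y(t) = \beta_{\tau(i)} + O(t^b)$ this yields $\ord_{\mathfrak{p}_i} W(z_1, z_2) = b - 1$, matching the stated inequality (I read $\tau(j)$ as a typo for $\tau(i)$; with $j \ne i$ the stated bound $\ge -1$ is trivial since $W(z_1,z_2)$ is regular on the affine chart).

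The main delicate point is the branch analysis underlying (ii) when $\mathfrak{p}_i$ is singular: the identity must be proved on every analytic branch through $\mathfrak{p}_i$, and one must rule out constant branches in $x$ or $y$. The assumption that $P(x) - Q(y)$ has no linear factor, carried in the theorems that apply this lemma, is precisely what excludes these degenerate branches, since a constant branch in $x$ would force $P(x) - Q(y)$ to be divisible by some $x - \alpha_i$.
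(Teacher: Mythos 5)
Your proposal is correct and follows essentially the same route as the paper: (i) from the two-term decomposition of $L_{i,j}$ and the order of a sum, (ii) by expanding $P$ and $Q$ about $\alpha_i$ and $\beta_{\t(i)}$ and matching the lowest-order terms along a local parameterization, and (iii) from $z_2\equiv 1$ near $\mathfrak p_i$ so that $W(z_1,z_2)$ reduces to $-dz_1$ (you also correctly read the $\t(j)$ in (iii) as a typo for $\t(i)$, and correctly note that the paper's $\ord dz_2$ should be $\ord dz_1$). Your extra care about degenerate branches in (ii) is sound, though it is automatic here: a branch with $z_0\equiv\a_i z_2$ would force $x-\a_i$ to divide $P(x)-Q(y)$, i.e.\ $Q$ constant, which is excluded since $\deg Q\ge 2$.
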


\begin{proof} (i) follows directly from the definition of $L_{i,j}$.

(ii)  By  the following expansion of
$P(x)$ and $Q(x)$:
$$
P(x)=P(\a_i)+\sum_{j=p_i+1}^n \nu_{i,j} (x-\a_i)^j, \quad\text{ and } 
Q(x)=Q(\beta_{\t(i)})+\sum_{j=q_i+1}^m \mu_{i,j} (x-\beta_{\t(i)})^j
$$
where $\nu_{i,p_i+1}, \nu_{i, n}, \mu_{i,q_{\t(i)}+1}$ and $\mu_{i, m}$ are non-zero constants.
If $\mathfrak
p_i=(\a_i,\beta_{\t(i)}, 1)\in C$, then $F(z_0,z_1,z_2)$ can be
expressed in terms of $z_0-\a_iz_2$ and $z_1-\beta_{\t(i)}z_2$ as 
\begin{align*}
&F(z_0,z_1,z_2)= \nu_{i,p_i+1}(z_0-\a_iz_2)^{p_i+1}+\{\text{terms in $z_0-\a_iz_2$  of higher degrees }\}\\
  &\quad \quad+\mu_{i,q_{\t(i)}+1}(z_1-\beta_{\t(i)}z_2)^{q_{\t(i)}+1}+\{\text{terms in $z_1-\beta_{\t(i)}z_2$ of higher degrees}\}.
\end{align*}
The lemma is proved by comparing the orders of the term of lowest degree and terms of higher degrees.

(iii) As $z_2 \equiv 1$ on a neighborhood of $\mathfrak p_i$,  

\centerline {$\ord_{\mathfrak p_i} W (z_1,z_2) =  \ord_{\p_i} dz_2 \ge
\ord_{\mathfrak p_i} (z_1 - \beta_{\t(i)}z_2) - 1.$ }
\end{proof}

\begin{lemma}\label{regular} The following assertions hold, on the curve $C$:
\begin{itemize}  \item[(i)] Given $i\in\{l_0+1,\dots, l\}$,  $\displaystyle\eta_1:=\frac{W(z_1,z_2)}{(z_0-\a_iz_2)^{p_i}} 
$ 
is  regular at finite points.
\item[(ii)]  Given $i\in \{1,2,\dots,l_0\}$. Then $\displaystyle
\eta_2:=\frac{(z_1-\beta_{\t(i)}z_2)^{q_{\t(i)}}W(z_1,z_2)}{(z_0-\a_iz_2)^{p_i}} 
$ 
is  regular at  finite points.
 \item[(iii)]  If $|p_i- q_{\t(i)}|\le 2,$ then
$\displaystyle\eta_3:=\frac{(z_1-\beta_{\t(i)}z_2)W(z_1,z_2)}{(z_0-\alpha_iz_2)}$ is regular at $\p_i$,  except when $p_i=1$ and $q_{\t(i)}=3$.
  \item[(iv)] Given  $i,j\in \{1,2,\dots,l_0\}$ and integers $u, v,$
let
$$\zeta_{u,v}:=\frac{L_{i,j}^uW(z_1,z_2)}{(z_0-\a_iz_2)^v}.$$
Then
\begin{itemize}\item[(a)] $\zeta_{u,v}$ is regular at  $\mathfrak{p_i}=(\alpha_i,\beta_{\t(i)},1)$ if $|p_i-q_{\t(i)}|\le 1,$ $u\ge v$
 and $p_i\ge  v.$

Moreover, 
 $\zeta_{2,1}$ is regular at  $\mathfrak{p_i}=(\alpha_i,\beta_{\t(i)},1)$  if $|p_i-q_{\t(i)}|\le 2$.
%\item[(c)]  $\zeta_{1,1}$ is regular at  $\mathfrak{p_i}=(\alpha_i,\beta_{\t(i)},1)$ if $|p_i-q_{\t(i)}|\le 1.$
\item[(b)]
$\zeta_{1,2}$ and $\zeta_{2,3}$ are regular at  $\mathfrak{p_i}=(\alpha_i,\beta_{\t(i)},1)$ if $p_i=q_{\t(i)}+ 1$.
\end{itemize}\end{itemize}
 \end{lemma}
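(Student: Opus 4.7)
The plan is to reduce every regularity question to a local order computation at the finitely many points of $C$ where the relevant denominator vanishes, and then apply the order relations recorded in Lemma~\ref{order}. Throughout I set $p := p_i$ and $q := q_{\t(i)}$, and on a branch of $C$ through a singular point $\p_i$ I write $a := \ord_{\p_i}(z_0-\a_i z_2)$ and $b := \ord_{\p_i}(z_1-\b_{\t(i)}z_2)$, so that $(p+1)a=(q+1)b$ and $\ord_{\p_i} W(z_1,z_2) \ge b-1$. At a smooth point $(\a_i,z_1^*,1)$ with $z_1^* \ne \b_{\t(i)}$ I parameterize locally by $t = z_0-\a_i$; the relation $P(z_0)-P(\a_i)=Q(z_1)-Q(z_1^*)$ near $t=0$ together with $Q'(z_1^*) \ne 0$ yields $z_1-z_1^* \sim c\,t^{p+1}$, so $\ord W(z_1,z_2) = p$ and $\ord(z_0-\a_i z_2) = 1$.

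For parts (i) and (ii) the denominator $(z_0-\a_i z_2)^{p_i}$ vanishes in the affine chart $z_2=1$ only at points $(\a_i,z_1^*,1)$ with $Q(z_1^*) = P(\a_i)$. In case (i) the assumption $i>l_0$ and Hypothesis I on $Q$ force $z_1^* \notin \{\b_1,\dots,\b_h\}$, so every such point is smooth and the local computation above yields $\ord \eta_1 = p - p = 0$. In case (ii) the same computation handles $z_1^* \ne \b_{\t(i)}$, while at $\p_i$ the bound $\ord \eta_2 \ge qb+(b-1)-pa = (p+1)a-1-pa = a-1 \ge 0$ uses $(p+1)a=(q+1)b$ and $a \ge 1$.

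For (iii) and (iv) the analysis is entirely local at $\p_i$. The identity $(p+1)a=(q+1)b$ forces $(a,b)$ to be a positive integer multiple of the reduced ratio, which fixes $a$ and $b$ up to a scaling factor $k \ge 1$. Substituting into $\ord \eta_3 \ge 2b-a-1$ and checking the sub-cases $|p-q| \le 2$ yields non-negativity in each case except $p=1$, $q=3$, where $(a,b)=(2,1)$ produces the single exception in the statement. For (iv) the uniform lower bound is $\ord\zeta_{u,v} \ge u\min(a,b)+(b-1)-va$; in case (a) with $|p-q|\le 1$ the hypotheses $u \ge v$ and $p \ge v$ make this non-negative after substituting the minimal $(a,b)$, and $\zeta_{2,1}$ with $|p-q|=2$ is checked directly. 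In case (b), substituting $p=q+1$ (so $(a,b)=((q+1)k,(q+2)k)$) into the bound gives $\ord \zeta_{1,2} \ge k-1$ and $\ord \zeta_{2,3} \ge k-1$, both non-negative.

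The main obstacle is the case bookkeeping in part (iii): one must identify the exception $p=1$, $q=3$ by running through all $|p-q|\le 2$ combinations and using the integrality of $a,b$ together with $\ord W(z_1,z_2) \ge b-1$. The remaining steps are routine local expansions that I would organize in a compact case table once the framework above is in place.
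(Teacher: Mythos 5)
Your proposal is correct and, for the substantive parts (iii) and (iv), follows the same route as the paper: the relation $(p_i+1)a=(q_{\t(i)}+1)b$ from Lemma~\ref{order}(ii), the integrality observation that $(a,b)$ must be a positive multiple of the reduced ratio $\bigl(\tfrac{q_{\t(i)}+1}{\gcd},\tfrac{p_i+1}{\gcd}\bigr)$ (the paper's ``Claim''), and the bound $\ord W(z_1,z_2)\ge b-1$, followed by the same case check that isolates the single exception $p_i=1$, $q_{\t(i)}=3$. The only place you diverge is in (i) and (ii): the paper uses the identity (\ref{6}) to trade the denominator $\partial F/\partial z_0=\prod(z_0-\a_jz_2)^{p_j}$ for $\partial F/\partial z_1=\prod(z_1-\b_kz_2)^{q_k}$ and then observes that the new denominator simply does not vanish at the points in question (after cancelling $(z_1-\b_{\t(i)}z_2)^{q_{\t(i)}}$ in case (ii)), whereas you compute orders directly from the local expansion $z_1-z_1^*\sim ct^{\,p_i+1}$ at the smooth points $(\a_i,z_1^*,1)$ and from $\ord\eta_2\ge q_{\t(i)}b+(b-1)-p_ia=a-1$ at $\p_i$. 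Both are sound; your version makes the exact order $\ord\eta_1=0$ visible, while the paper's avoids any local expansion at smooth points. I checked your bounds $2b-a-1$, $u\min(a,b)+(b-1)-va$, and $b-a-1=k-1$ against all the relevant subcases and they come out non-negative exactly where claimed.
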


\begin{proof} (i)
At finite points,  $\eta_1$ has possible pole when $z_2=1$ and $z_0=\a_i$. However, by (\ref{6}) 
\begin{align*}\eta_1&=\frac{\prod_{j=1,...,l,j\ne i}(z_0-\a_jz_2)^{p_j}W(z_1,z_2)}{(z_0-\a_1z_2)^{p_1}\dots (z_0-\a_lz_2)^{p_l}}\\
&=\frac{\prod_{j=1,...,l,j\ne i}(z_0-\a_jz_2)^{p_j}W(z_2,z_0)}{(z_0-\beta_1z_2)^{q_1}\dots (z_0-\beta_hz_2)^{q_h}},\end{align*}
hence,  $(\a_i, a,1)$ is a pole if $a=\beta_k$ for some $k=1,...,h$. By the definition of the set $A_0$ and $l_0$, there does not exist any such $a$. We are done for (i).

(ii) Similar to the case (i), at finite points,  $\eta_2$ has possible pole when $z_2=1$,  $z_0=\a_i$ and
\begin{align*}\eta_2&=\frac{(z_1-\beta_{\t(i)}z_2)^{q_{\t(i)}}\prod_{j=1,...,l,j\ne i}(z_0-\a_jz_2)^{p_j}W(z_1,z_2)}{(z_0-\a_1z_2)^{p_1}\dots (z_0-\a_lz_2)^{p_l}}\\
&=\frac{\prod_{j=1,...,l,j\ne i}(z_0-\a_jz_2)^{p_j}W(z_2,z_0)}{\prod_{j=1,...,h,j\ne \t(i)}(z_0-\beta_jz_2)^{q_j}}.\end{align*}
 By the definition of the set $A_0$, if $(\a_i, a,1)$ is a pole then $a=\beta_{\t(i)}$, but the term $(z_1-\beta_{\t(i)}z_2)$ is canceled in the denominator in the second part of the above formula.
We are done for (ii).

(iii) From Lemma~\ref{order}(ii), we have
\begin{align}\label{p}(p_i+1)\ord_{\mathfrak p_i}(z_0-\a_iz_2)=(q_{\t(i)}+1)\ord_{\p_i}(z_1-\beta_{\t(i)}z_2).
\end{align}
We first prove the following claim.

{\bf Claim.} Assume that  $b=\gcd(p_i+1,q_{\t(i)}+1)$. Then $\ord_{\mathfrak p_i}(z_0-\a_iz_2)\ge \displaystyle\frac{q_{\t(i)}+1}b$. Furthermore, if   $q_{\t(i)}=p_i+2$ and $p_i\ge 2$   then  $\ord_{\mathfrak p_i}(z_0-\a_1z_2)\ge  \max\{3,\displaystyle\frac{p_i+3}2\}$.

Indeed, we can write  $p_i+1=bh_1$ and $q_{\t(i)}+1=bh_2,$ where  $h_1$ and $h_2$ are relatively prime.  From (\ref{p}) we have  $ \ord_{\p_i}(z_0-\a_iz_2)\ge\displaystyle \frac{q_{\t(i)}+1}b.$
 If  $q_{\t(i)}=p_i+2$ then  $b=1$ or $b=2$. It follows that
$\ord_{\mathfrak p_i}(z_0-\a_iz_2)\ge\displaystyle \frac{q_{\t(i)}+1}b\ge \max\{3,\displaystyle\frac{p_i+3}2\}$ if $p_i\ge 2$.

We now go back to prove the lemma.

If $p_i\ge q_{\t(i)}$ then the lemma obviously holds because
from (\ref{p}) we have 
$$\ord_{\mathfrak p_i}(z_0-\a_iz_2)\le\ord_{\p_i}(z_1-\beta_{\t(i)}z_2).$$ 
So, we may assume that $p_i<q_{\t(1)}\le p_i+2$.

If  $q_{\t(i)}= p_i+1$ then,  by the above claim,    $ \ord_{\p_i}(z_0-\a_iz_2)\ge q_{\t(i)}+1=p_i+2$. Hence
\begin{align*}\ord_{\p_i}\eta_3&\ge \ord_{\p_1}(z_1-\beta_{\t(i)}z_2)+\ord_{\p_1} W(z_1,z_2)-\ord_{\p_1}(z_0-\a_iz_2)\\
&\ge2\ord_{\p_1}(z_1-\beta_{\t(i)}z_2)-\ord_{\p_1}(z_0-\a_iz_2)-1\\
&\ge \Big(\frac{2(p_i+1)}{q_{\t(i)}+1}-1\Big)\ord_{\p_i}(z_0-\a_iz_2)-1\\
&\ge \frac{p_i}{p_i+2}\ord_{\p_i}(z_0-\a_iz_2)-1\ge 0.
\end{align*}

If   $q_{\t(i)}= p_i+2$ and $p_1\ge2$ then  by the claim, $ \ord_{\p_i}(z_0-\a_1z_2)\ge 3$. Hence 
\begin{align*}\ord_{\p_i}\eta_3&\ge \ord_{\p_i}(z_1-\beta_{\t(i)}z_2) + \ord_{\p_i}W(z_1,z_2)-\ord_{\p_i}(z_0-\a_iz_2)\\
&\ge 2\ord_{\p_i}(z_1-\beta_{\t(i)}z_2)-\ord_{\p_i}(z_0-\a_iz_2)-1\\
&\ge \Big(\frac{2(p_i+1)}{q_{\t(i)}+1}-1\Big)\ord_{\p_i}(z_0-\a_iz_2)-1\\
&\ge \frac{3(p_i-1)}{p_i+3}-1\ge 0
\end{align*}
if $p_i\ge 3.$ If $p_i=2 $, 
 then, by the claim, 
$\ord_{\mathfrak p_i}(z_0-\a_iz_2)\ge\frac {5}{\gcd(3,5)}=5$, hence $\ord_{\p_i}\eta_3\ge0$. 
The assertion (iii) is proved.

(iv)
 If $p_i\le   q_{\t(i)}$ then  $\ord_{\mathfrak p_i}L_{i,j}\ge \ord_{\mathfrak p_i}(z_1-\beta_{\t(i)}z_2)=\displaystyle \frac{(p_i+1)}{q_{\t(i)}+1}\ord_{\p_i}(z_0-\a_1z_2)$.  Therefore,
\begin{align*}\zeta_{u,v}&\ge(u+1)\ord_{\p_i}(z_1-\beta_{\t(1)}z_2) -v.\, \ord_{\p_i}(z_0-\a_2z_2)-1\\
&\ge \Big( \frac{(u+1)(p_i+1)}{q_{\t(i)}+1}-v\Big)\ord_{\p_i}(z_0-\a_1z_2)-1.
\end{align*}

\noindent  If $q_{\t(i)}=p_i+1$ then, by the above claim,  $\ord_{\p_i}(z_0-\a_iz_2)\ge p_i+2.$ Hence, $\ord_{\p_i}\zeta_{2,1}\ge 2p_i\ge 0,$ and
\begin{align*}
&\ord_{\p_i}\zeta_{u,v}\ge {(u-v)(p_i+1)+(p_i-v)}\ge 0 \text{ if $u\ge v$ and $p_i\ge v$.}
\end{align*}

\noindent If $q_{\t(i)}=p_i+2$ then $\ord_{\p_i}(z_0-\a_2z_2)\ge 2$ and we only consider for $\zeta_{2,1}$. We have  
$$\ord_{\p_i}\zeta_{2,1}\ge\displaystyle \frac{4p_i}{p_i+3}-1=\frac{3p_i-3}{p_1+3}\ge 0$$  for every $p_i\ge 1$.
 
  If $p_i>  q_{\t(i)}$, then   $\ord_{\mathfrak p_i}L_{i,j}\ge \ord_{\mathfrak p_i}(z_0-\a_iz_2)$, hence the assertions (a)  obviously holds.
For the assertion (b), we have $p_i=q_{\t(i)}+1$. Therefore, by the claim, we have  $\ord_{\mathfrak p_i}(z_0-\a_iz_2)\ge p_i$ and hence
\begin{align*}\ord_{\p_i}\zeta_{1,2}&\ge \ord_{\mathfrak p_i}(z_0-\a_iz_2)+\ord_\p W(z_1,z_2) -2 \ord_{\mathfrak p_i}(z_0-\a_iz_2)\\
&\ge \ord_{\p_i}(z_1-\beta_{\t(1)}z_2)- \ord_{\mathfrak p_i}(z_0-\a_iz_2)-1\\
&\ge\big( \frac{p_i+1}{p_i}-1\big)\ord_{\mathfrak p_i}(z_0-\a_iz_2)-1\ge 0,\end{align*}
and
\begin{align*}\ord_{\p_i}\zeta_{2,3}&\ge 2\ord_{\mathfrak p_i}(z_0-\a_iz_2)+\ord_\p W(z_1,z_2) -3 \ord_{\mathfrak p_i}(z_0-\a_iz_2)\\
&\ge \ord_{\p_i}(z_1-\beta_{\t(1)}z_2)- \ord_{\mathfrak p_i}(z_0-\a_iz_2)-1\ge 0.\end{align*}
Thus, the lemma~\ref{regular} is proved.
% and if $p_1\ge 2$ then $\ord_{\p_i}(z_0-\a_2z_2)\ge 3$ and
% $\ord_{\p_i}\zeta_2\ge\displaystyle \frac{3(p_i-3)}{p_i+3}-1=\frac{2p_i-12}{p_i+3}\ge0$ if $p_i\ge 6$. 
 %If $p_i=5,4,3,2 $
% then $b=\gcd(p_i+1,p_i+3)=2, $, hence $\ord_{\mathfrak p_i}(z_0-\a_iz_2)\ge 4, 7, 3, 5.$  
\end{proof}

\begin{proof}[{Proof of Theorem \ref{th1}.}] 
Consider
\begin{align*}\omega_1&:=\frac{z_0z_2^{\sum_{i=1}^{l_1}(p_i-q_{\t(i)})+\sum_{i=l_0+1}^{l}p_i-3}\prod_{i=1}^{l_1}{(z_1-\beta_{\t(i)}z_2)}^{q_{\t(i)}}W(z_1,z_2)}{\prod_{i=1}^{l_1}{(z_0-\alpha_iz_2)}^{p_i}\prod_{i=l_0+1}^l(z_0-\alpha_iz_2)^{p_i}},\\
\omega_2&:=\frac{z_2^{\sum_{i=1}^{l_1}(p_i-q_{\t(i)}) +\sum_{i=l_0+1}^{l}p_i-2}\prod_{i=1}^{l_1}{(z_1-\beta_{\t(i)}z_2)}^{q_{\t(i)}}W(z_1,z_2)}{\prod_{i=1}^{l_1}{(z_0-\alpha_iz_2)}^{p_i}\prod_{i=l_0+1}^l(z_0-\alpha_iz_2)^{p_i}}.\end{align*}
They are well-defined rational  1-forms of Wronskian type and
 are $\CC$-linearly independent on any irreducible component
 of the curve $C$ because of
the hypothesis that $P(x)-Q(y)$ has no linear factor.
However, by the hypothesis
$$\sum_{i=1}^{l_1}(p_i-q_{\t(i)})+\sum_{i=l_0+1}^{l}p_i-3\ge n-m\ge 0,$$
their denominators are factors of  $\displaystyle \frac{\partial F}{\partial z_0}$. 
We will prove they are regular at every singular point of the curve $C$. By Lemma~\ref{regular}(ii), $\omega_1, \omega_2$ are regular at $(\alpha_i, \beta_{\t(i)},1)$ for $i=1,\dots,l_1$. By Lemma~\ref{regular}(i), $\omega_1, \omega_2$ are regular at  finite singular points for any $j=l_0+1,\dots, l$.  We only have to check the regularity of $\omega_1, \omega_2$ at $(0,1,0)$.
By (\ref{6}) and the fact 
$$\displaystyle \frac{\partial F}{\partial z_1}=mb_mz_2^{n-m}{(z_1-\beta_1z_2)}^{q_1}\dots(z_1-\beta_hz_2)^{q_h},$$
we have
\begin{align*}\omega_1&=\frac{z_0z_2^{\sum_{i=1}^{l_1}(p_i-q_{\t(i)})+\sum_{i=l_0+1}^{l}p_i-3}
\prod_{i=1}^{l_1}{(z_1-\beta_{\t(i)}z_2)}^{q_{\t(i)}}
\prod_{i=l_1+1}^{l_0}{(z_0-\a_i z_2)}^{p_i}W(z_1,z_2)}
{\prod_{i=1}^{l}{(z_0-\alpha_iz_2)}^{p_i}}\\
&=\frac{z_0z_2^{\sum_{i=1}^{l_1}(p_i-q_{\t(i)})+\sum_{i=l_0+1}^{l}p_i-3}
\prod_{i=1}^{l_1}{(z_1-\beta_{\t(i)}z_2)}^{q_{\t(i)}}
\prod_{i=l_1+1}^{l_0}{(z_0-\a_i z_2)}^{p_i}W(z_2,z_0)}
{mb_mz_2^{n-m}{(z_1-\beta_1z_2)}^{q_1}\dots(z_1-\beta_hz_2)^{q_h}}\\
&=\frac{z_0z_2^{\sum_{i=1}^{l_1}(p_i-q_{\t(i)})+\sum_{i=l_0+1}^{l}p_i-3-n+m}
\prod_{i=1}^{l_1}{(z_1-\beta_{\t(i)}z_2)}^{q_{\t(i)}}
\prod_{i=l_1+1}^{l_0}{(z_0-\a_i z_2)}^{p_i}W(z_2,z_0)}
{mb_m{(z_1-\beta_1z_2)}^{q_1}\dots(z_1-\beta_hz_2)^{q_h}},
\end{align*}
which implies $(0,1,0)$ can not be a pole of $\omega_1$.
\end{proof}

%\section{Proof of theorem \ref{th2} }

\subsection{Proof of theorem  \ref{th3} }

In this section,  we always assume $n=m$.

%In the proof of Theorem  \ref{th3}, we will need the following lemma.

\begin{lemma}\label{big2} Assume that the curve $C$ has no linear component. If  one of the following holds then the curve $C$ cannot have an irreducible
component of genus 0 or 1.
\begin{itemize}\item[(a)] $l_0\ge 2$ and $\sum_{i=1}^{l_1}(p_i-q_i)+\displaystyle\sum_{i=l_0+1}^l{p_i}=2,$  
 \item[(b)]$l_0\ge1$ and $\displaystyle\sum_{i=l_0+1}^l{p_i}=2,$ except the case when $l_0=1$ and  $p_1=1, q_{\t(1)}=3$.
\item[(c)]$l_0\ge 2$  and $l=l_0+1$, except when $l_0=2$, $p_{l_0+1}=1$ and $p_1=p_2=1.$
\end{itemize}\end{lemma}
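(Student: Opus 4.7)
The plan is to apply the Key Lemma in each of the three cases by producing two $\CC$-linearly independent Wronskian-type 1-forms $\omega_1,\omega_2$ on $C$ whose denominators divide $\partial F/\partial z_0 = P'(z_0,z_2)$ (or $\partial F/\partial z_1$ after using identity (\ref{6})) and that are regular at every singular point. By Lemma~\ref{singular}, since $n=m$ these singular points are exactly $\p_i=(\a_i,\b_{\t(i)},1)$ for $i=1,\ldots,l_0$, and there is nothing to check at infinity.

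The working template, lifted from the proof of Theorem~\ref{th1}, is
\[
\omega \;=\; \frac{z_0^{a}\,z_2^{b}\prod_{i=1}^{l_1}(z_1-\b_{\t(i)}z_2)^{q_{\t(i)}}\,W(z_1,z_2)}{\prod_{i=1}^{l_1}(z_0-\a_i z_2)^{p_i}\,\prod_{i=l_0+1}^{l}(z_0-\a_i z_2)^{p_i}},
\]
with $a+b=\sum_{i\in A_1}(p_i-q_{\t(i)})+\sum_{i>l_0}p_i-2$ forced by the Wronskian-type degree balance. The factor $\prod(z_1-\b_{\t(i)}z_2)^{q_{\t(i)}}$ gives regularity at $\p_i$, $i\in A_1$, via Lemma~\ref{regular}(ii); the $\p_i$ for $l_1<i\le l_0$ do not appear in the denominator; Lemma~\ref{regular}(i) handles $i>l_0$. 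In Theorem~\ref{th1} the slack $a+b\ge 1$ allowed two independent forms by varying the $z_0,z_2$ exponents. Here the slack is $0$ in cases (a) and (c), and controlled in case (b), so we trade a factor of $z_2$ (or $z_0$) for the linear form $L_{i,j}$ joining two singular points $\p_i,\p_j\in A_0$: by Lemma~\ref{order}(i) $L_{i,j}$ vanishes at both $\p_i$ and $\p_j$, and Lemma~\ref{regular}(iv) shows this trade preserves regularity whenever the multiplicity gap $|p_i-q_{\t(i)}|$ is small enough.

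In case (a), take $\omega_1$ to be the template with $a=b=0$ and $\omega_2$ by multiplying the numerator by $L_{1,2}$ and the denominator by one compensating linear factor chosen so that the new denominator still divides a partial derivative of $F$. In case (b), $\sum_{i>l_0}p_i=2$ pins the rightmost product in the denominator; when $l_0\ge 2$ we reuse the $L_{1,2}$ trick from (a), and when $l_0=1$ we analyze the single singular point $\p_1$ directly through Lemma~\ref{regular}(iii), whose exception $p_1=1,q_{\t(1)}=3$ is exactly the excluded configuration. In case (c), $l=l_0+1$ forces $(z_0-\a_l z_2)^{p_l}$ into the denominator, while $l_0\ge 2$ again provides $L_{1,2}$; the excluded configuration $l_0=2,\,p_1=p_2=p_{l_0+1}=1$ corresponds to $n=3$, where no admissible construction exists. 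Linear independence always follows from the no-linear-factor hypothesis: the ratio $\omega_2/\omega_1$ is a nonconstant rational function on every irreducible component of $C$. The main obstacle is the regularity verification for $\omega_2$ at $\p_i$ with $i\in A_0\setminus A_1$: the denominator omits $(z_0-\a_i z_2)$, but the inserted $L_{i,j}$ vanishes at $\p_i$, and the net order count must be nonnegative; this requires Lemma~\ref{order}(ii) together with the gcd-based Claim embedded in the proof of Lemma~\ref{regular}(iii), and it is precisely where the stated exceptions in (b) and (c) appear.
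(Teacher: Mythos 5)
Your overall strategy is the paper's: for each configuration build two Wronskian-type $1$-forms with denominators dividing $\partial F/\partial z_0$, use the lines $L_{i,j}$ through pairs of singular points to absorb the missing slack, and invoke Lemmas~\ref{order} and~\ref{regular} for regularity. But the proposal leaves the actual substance of the proof undone. First, you never carry out the subcase decomposition the argument requires: in (a) the total $\sum_{i\le l_1}(p_i-q_{\t(i)})+\sum_{i>l_0}p_i=2$ can be realized as a single index with $p_1-q_{\t(1)}=2$, as two indices with gap $1$, as one gap of $1$ together with $l=l_0+1$, or as $\sum_{i>l_0}p_i=2$, and each of these needs a \emph{different} pair of forms (the paper's $\omega_{1,2}$ carries $L_{1,2}^2$ and drops a power of $z_1-\b_{\t(1)}z_2$; $\omega_{2,2}$ carries $L_{1,2}$ to the first power; case~3 needs $\omega_{3,1},\omega_{3,2}$ with three distinct denominator factors). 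Your single recipe --- template times $L_{1,2}$ over ``one compensating linear factor'' --- is not regular in general: at the second base point $\p_2$ of $L_{1,2}$ the piece $L_{1,2}W(z_1,z_2)/(z_0-\a_2z_2)$ has order bounded below only by $\bigl(\tfrac{2(p_2+1)}{q_{\t(2)}+1}-1\bigr)\ord_{\p_2}(z_0-\a_2z_2)-1$, which is negative when, say, $p_2=1$ and $q_{\t(2)}=3$ (a configuration admissible under (a)); this is exactly why Lemma~\ref{regular}(iv,a) has a separate clause for $\zeta_{2,1}=L^2W(z_1,z_2)/(z_0-\a_iz_2)$ and why the paper squares $L_{1,2}$ there. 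The choice of exponents on $L_{i,j}$ and of the compensating denominators is where the stated exceptions in (b) and (c) actually come from, and it is missing.

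Second, the independence argument ``the ratio $\omega_2/\omega_1$ is nonconstant since $C$ has no linear factor'' only works when that ratio is a quotient of two linear forms. Once $L_{1,2}^2$ is forced (as it is in the paper's cases 1 and 4), the ratio is quadratic over quadratic, and dependence on a component only shows that the component lies on a conic. The paper closes this by observing that one of the two forms alone is already regular and nontrivial, so every component has genus $\ge 1$, whereas a conic component would have genus $0$. Finally, a small but telling slip: the excluded configuration in (c), namely $l_0=2$ and $p_1=p_2=p_{l_0+1}=1$, gives $n=p_1+p_2+p_3+1=4$, not $n=3$; it lands in exceptional case (3) of Theorem~\ref{th3}.
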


\begin{proof} By possibly rearranging the indices,
we only have to consider the following cases:

\item{(1)}  $l_0\ge 2$ and $(1,\t(1)), (2,\t(2))\in {A_0}$  such that $p_1-q_{\t(1)}=2. $
\item{(2)}  $l_0\ge 2$ and $(1,\t(1)), (2,\t(2))\in {A_1}$  such that $p_{i}-q_{\t(i)}=1$ with $i=1,2$.
\item{(3)}  $l_0\ge 2$ and  $(1,\t(1))\in {A_1}$  such that $p_{1}-q_{\t(1)}=1$ and $l= l_0+1$ and $|p_{j}-q_{\t(j)}|\le 1$ for every $j=1,...,l_0$.
\item{(4)} $l_0\ge 1$ and $\displaystyle\sum_{i=l_0+1}^l{p_i}=2,$ except when $l_0=1$ and $p_1=1, q_{\t(1)}=3$.
%\item{(5)} $l_0\ge 1$, $l= l_0+1$ and  $p_{l_0+1}=2$, except when $l_0=1$ and $p_1=1, q_{\t(1)}=3$.
\item{(5)} $l_0\ge 2$ and $l=l_0+1$ and $p_{l_0+1}=1$ and  $0\le q_{\t(i)}-p_i\le1$ with $i=1,2,...,l_0$, except when $l_0=2$, $p_{l_0+1}=1$ and $p_1=p_2=1.$
 
\noindent (Note that in the case 3, if $|p_{j}-q_{\t(j)}|\ge 3$ then we are done because of Theorem~\ref{th1}, if there exists $j$, $(j\in\{1,...,l_0\})$, such that $p_{j}-q_{\t(j)}=2$ then we go back to the case 1, if $ q_{\t(j)}-p_{j}=2$ then proceed as in case 1. Therefore, we could assume $|p_{j}-q_{\t(j)}|\le 1$ for any $j=1,...,l_0$.)

Corresponding to each case, we will  construct two rational 1-forms of Wronskian type which satisfy all the  conditions  of the Key lemma.

 (1)
\begin{align*}\omega_{1,1}&=\frac{(z_1-\beta_{\t(1)}z_2)^{p_1-2}W(z_1,z_2)}{(z_0-\alpha_1z_2)^{p_1}},\\
\omega_{1,2}&=\frac{L_{1,2}^2(z_1-\beta_{\t(1)}z_2)^{p_1-3}W(z_1,z_2)}{(z_0-\alpha_1z_2)^{p_1}(z_0-\alpha_2z_2)}.\end{align*}

(2) \begin{align*}\omega_{2,1}&=\frac{(z_1-\beta_{\t(1)}z_2)^{p_1-1}(z_1-\beta_{\t(2)}z_2)^{p_2-1}W(z_1,z_2)}{(z_0-\alpha_1z_2)^{p_1}(z_0-\alpha_2z_2)^{p_2}},\\
\omega_{2,2}&=\frac{L_{1,2}(z_1-\beta_{\t(1)}z_2)^{p_1-2}(z_1-\beta_{\t(2)}z_2)^{p_2-1}W(z_1,z_2)}{(z_0-\alpha_1z_2)^{p_1}(z_0-\alpha_2z_2)^{p_2}}.\end{align*}

(3) 
\begin{align*}\omega_{3,1}&=\frac{L_{1,2}^{2}W(z_1,z_2)}{(z_0-\alpha_1z_2)^{2}(z_0-\alpha_2z_2)(z_0-\alpha_{l_0+1}z_2)},\\
\omega_{3,2}&=\frac{(z_1-\beta_{\t(1)}z_2)L_{12}W(z_1,z_2)}{(z_0-\alpha_1z_2)^{2}(z_0-\alpha_2z_2)(z_0-\alpha_{l_0+1}z_2)}.\end{align*}

(4) 
\begin{align*}\omega_{4,1}&=\frac{W(z_1,z_2)}{\prod_{i=l_0+1}^l(z_0-\alpha_{i}z_2)},\\
\omega_{4,2}&=\begin{cases}\displaystyle\frac{L_{1,2}^2W(z_1,z_2)}{(z_0-\alpha_1z_2)(z_0-\alpha_2z_2){\prod_{i=l_0+1}^l(z_0-\alpha_{i}z_2)}}& \text{ if } l_0\ge2\\
\displaystyle\frac{(z_1-\beta_{\t(1)}z_2)W(z_1,z_2)}{(z_0-\alpha_1z_2){\prod_{i=l_0+1}^l(z_0-\alpha_{i}z_2)}} &\text{Otherwise,  except when}\\
&\quad  p_1=1, \text{ and } q_{\t(1)}=3.
\end{cases}\end{align*}

(5)  Assume $p_1\ge p_2\ge\dots\ge p_{l_0}.$ Take
$$\omega_{5,1}=\displaystyle\frac{L_{1,2}W(z_1,z_2)}{(z_0-\alpha_1z_2)(z_0-\alpha_2z_2)(z_0-\alpha_{l_0+1}z_2)}$$
and
\begin{align*}\omega_{5,2}&=\begin{cases}\displaystyle\frac{L_{1,2}^2W(z_1,z_2)}{(z_0-\alpha_1z_2)^2(z_0-\alpha_2z_2)(z_0-\alpha_{l_0+1}z_2)}&\,\,\text{ if $p_1\ge 2$}\\
\displaystyle\frac{L_{1,2}L_{1,3}W(z_1,z_2)}{(z_0-\alpha_1z_2)(z_0-\alpha_2z_2)(x-\alpha_3z_2)(z_0-\alpha_{l_0+1}z_2)} &\, \, \text{ if  $p_1=1$ and $l_0\ge 3$}.
\end{cases}\end{align*} 

We will show that the $\omega_{i,j}$'s satisfy condition (iii) in the Key lemma for all $ i=1,2,...,5$ and $j=1,2$. By Lemma~\ref{singular},  the curve $C$ does not have any singular point at infinity, so we only have to prove they are regular at every point $\mathfrak{p_i}=(\alpha_i,\beta_{\t(i)},1),$ $(i=1,\dots,l_0)$ which are zeros of their respective  denominators. 

We now prove that the $\omega_{i,j}$'s are regular at $\mathfrak{p_1}=(\alpha_1,\beta_{\t(1)},1)$.
By Lemma~\ref{order}, 
\begin{align*}&(p_1+1)\ord_{\mathfrak p_1}(z_0-\a_1z_2)=(q_{\t(1)}+1)\ord_{\p_1}(z_1-\beta_{\t(1)}z_2);\\ &\ord_{\mathfrak p_1}W(z_1,z_2)\ge \ord_{\p_1}(z_1-\beta_{\t(1)}z_2) -1;
\end{align*}
 and if $p_1\ge q_{\t(1)}$ then $\ord_{\p_1}L_{1,2}\ge \ord_{\p_1}(z_0-\a_1z_2).$  Hence, 
\begin{align*}\ord_{\p_1}\omega_{1,2}&=\ord_{\p_1}L^2_{1,2}+\ord_{\p_1}(z_1-\beta_{\t(1)}z_2)^{p_1-3}+\ord_{\p_1}W(z_1,z_2)-\ord_{\p_1}(z_0-\a_1z_2)^{p_1}\\
&\ge(p_1-2)\ord_{\p_1}(z_1-\beta_{\t(1)}z_2) -(p_1-2)\ord_{\p_1}(z_0-\a_1z_2)-1\\
&\ge (p_1-2)\Big(\frac{p_1+1}{p_1-1}-1\Big)\ord_{\p_1}(z_0-\a_1z_2)-1\ge  0,
\end{align*}
 because $\ord_{\p_1}(z_0-\a_1z_2)\ge 1$ and   $p_1-q_{\t(1)}=2$ so $p_1\ge3$. Therefore, $\omega_{1,2}$ is  regular at $\p_1$.
\begin{align*}\ord_{\p_1}\omega_{2,2}&=\ord_{\p_1}L_{1,2}+\ord_{\p_1}(z_1-\beta_{\t(1)}z_2)^{p_1-2}+\ord_{\p_1}W(z_1,z_2)-\ord_{\p_1}(z_0-\a_1z_2)^{p_1}\\
&\ge (p_1-1)\Big(\frac{p_1+1}{p_1}-1\Big)\ord_{\p_1}(z_0-\a_1z_2)-1\ge  0,
\end{align*}
 because $\ord_{\p_1}(z_0-\a_1z_2)\ge 1$ and   $p_1-q_{\t(1)}=1$ so $p_1\ge2$. Therefore, $\omega_{2,2}$ is  regular at $\p_1$.

Now,
$\omega_{1,1},\omega_{2,1}$  are regular at  $\mathfrak{p_1}$  by Lemma~\ref{regular}(i).

We know $\omega_{3,1}, \omega_{3,2}$ are regular at $\p_1$ because $p_1>q_{\t(1)},$ and so  
$$\ord_{\mathfrak p_1}(z_0-\a_1z_2)<\ord_{\p_1}(z_1-\beta_{\t(1)}z_2)$$
 and $\ord_{\mathfrak p_1}(z_0-\a_1z_2)<\ord_{\p_1}L_{1,2}$.

We know $\omega_{4,2}$ is regular at $\p_1$ by  Lemma~\ref{regular}(iv,a) if $l_0\ge 2$ and by Lemma~\ref{regular}(iii) otherwise,
except when $l_0=1$,  $p_1=1$ and $q_{\t(1)}=3$.

We know $\omega_{5,1}$ and $\omega_{5,2}$ are  regular at $\p_1$ by  Lemma~\ref{regular}(iv,a).

%$\omega_{5,2}$ is  regular at $\p_1$ by  Lemma~\ref{regular}(iv,a).

%$\omega_{1,2}$ is  regular at $\p_2$ by  Lemma~\ref{regular}(iv,a).

We know $\omega_{2,1}$, $\omega_{2,2}$ are regular at $\mathfrak{p_2}$  by Lemma~\ref{regular}~(i). 

We know $\omega_{1,2}$, $\omega_{3,1},$ $\omega_{3,2}$, $\omega_{4,2},$ $\omega_{5,1}$ and  $\omega_{5,2}$ are regular at $\p_2$ by  Lemma~\ref{regular}(iv,a).

%$\omega_{3,2}$ and $\omega_{5,1}$  are  regular at $\p_2$ by  Lemma~\ref{regular}(iv,a).

 By Lemma~\ref{regular}~(ii),  $\omega_{i,j}$'s, with $i=3,4,5$ and $j=1,2$,  have  no pole  on $z_0-\alpha_{l_0+1}=0;$ $\omega_{4,2}$  has  no pole  on $z_0-\alpha_{l_0+2}=0.$  

We are therefore done with showing  that the
$\omega_{i,j}$'s satisfy condition (iii) in the Key lemma.

By the conditions on $p_i$, condition (i) in the Key lemma is 
also satisfied.

 Because of the hypothesis that the curve $C$ does not have any linear
irreducible components,
we have  that $\omega_{i,1}$ and $\omega_{i,2}$ (with $i=2,3,5$) are
$\CC$-linearly independent  on any irreducible component
of the curve $C.$ On the other hand, we were able to construct
$\omega_{1,1}$ and $\omega_{4,1}$ which  are non-trivial regular 1-forms
on  any irreducible component  of the curve $C.$ Hence, every
irreducible component has genus at least 1. If  $\omega_{j,1}$ and
$\omega_{j,2}$ (for $j=1,$ or 4) are $\CC$-linearly dependent
on some irreducible component  of the curve $C,$ then $C$ must
have a quadratic component, which  contradicts the fact that
any irreducible component has genus at least 1.
Thus, condition (ii) in the Key lemma is satisfied.
\end{proof}

Using similar arguments, we also get the following corollary.

\begin{cor}\label{big2c} Assume that the curve $C$ has no linear
component.
If  one of the following holds then the curve $C$ is Brody hyperbolic:
\begin{itemize}\item[(a)] $l_0\ge 2$ and $\sum_{i=l_1+1}^{l_0}(q_{\t(i)}-p_i)+\displaystyle\sum_{i=l_0+1}^h{q_i}=2,$  
 \item[(b)]$l_0\ge1$ and $\displaystyle\sum_{i=l_0+1}^h{q_i}=2,$ except the case when $l_0=1$ and  $q_{\t(1)}=1, p_1=3$.
\item[(c)]$l_0\ge 2$  and $h=l_0+1$, except when $l_0=2$, $q_{l_0+1}=1$ and $q_{\t(1)}=q_{\t(2)}=1.$
\end{itemize}\end{cor}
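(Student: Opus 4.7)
The plan is to deduce the corollary from Lemma~\ref{big2} via the symmetry that exchanges the roles of $P$ and $Q$. Because we are in the section where $\deg P = \deg Q = n$, the projective automorphism
\[
\sigma : [z_0:z_1:z_2] \longmapsto [z_1:z_0:z_2]
\]
is a biregular isomorphism of $\PP^2(\CC)$ sending $C = \{F(z_0,z_1,z_2) = 0\}$ onto the curve $C' = \{Q(x) - P(y) = 0\}$ obtained by swapping the roles of $P$ and $Q$. Under $\sigma$, geometric genera of irreducible components are preserved, so Brody hyperbolicity is preserved; the no-linear-factor hypothesis is preserved; and the data attached to $P$ and $Q$ get exchanged: the pairs $(\alpha_i,p_i)$ are interchanged with the pairs $(\beta_j,q_j)$, the set $A_0$ is preserved (with indices transposed through $\t$), and the sets $A_1$ and $A_2$ are interchanged.

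With this dictionary, applying Lemma~\ref{big2} to $C'$ and pulling the conclusion back through $\sigma$ yields the three cases of the corollary: case (a) corresponds to case (a) of the lemma after replacing $\sum_{i=1}^{l_1}(p_i-q_{\t(i)})$ by $\sum_{i=l_1+1}^{l_0}(q_{\t(i)}-p_i)$ and $\sum_{i=l_0+1}^{l} p_i$ by $\sum_{i=l_0+1}^{h} q_i$; cases (b) and (c) follow analogously. The excluded configurations in (b) and (c) of the corollary are exactly the images, under the swap of data, of the excluded configurations in Lemma~\ref{big2}(b) and (c), so they match on the nose.

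Equivalently, one can redo the construction directly: in each case take Wronskian-type 1-forms $\omega^{*}_{i,1}, \omega^{*}_{i,2}$ obtained from those in the proof of Lemma~\ref{big2} by replacing $W(z_1,z_2)$ with $W(z_0,z_2)$, every denominator factor $(z_0-\alpha_j z_2)^{p_j}$ with $(z_1-\beta_j z_2)^{q_j}$, and every numerator factor $(z_1-\beta_{\t(i)}z_2)^{q_{\t(i)}}$ with $(z_0-\alpha_{\t(i)}z_2)^{p_{\t(i)}}$ (after relabelling so that $\t$ is the identity). Since $n=m$, the new denominators are factors of $\partial F/\partial z_1 = Q'(z_1,z_2)$, so condition (i) of the Key Lemma is met; there are no singularities at infinity by Lemma~\ref{singular}; and regularity at each finite singular point $(\alpha_i,\beta_{\t(i)},1)$ follows from the symmetric analogues of Lemmas~\ref{order} and~\ref{regular}, whose proofs transpose verbatim under $z_0 \leftrightarrow z_1$. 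Linear independence of the $\omega^{*}_{i,j}$'s on every irreducible component follows from the no-linear-factor hypothesis, exactly as before. The main point requiring care is the bookkeeping of the exceptional configurations: one must verify that those of Lemma~\ref{big2} translate, under the swap of data, precisely to the exceptions listed in parts (b) and (c), which reduces to a routine check of which regularity bound just fails after interchanging $p_i$ with $q_{\t(i)}$.
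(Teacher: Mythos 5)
Your proposal is correct and is essentially the paper's own argument: the paper dismisses Corollary~\ref{big2c} with the single line ``Using similar arguments, we also get the following corollary,'' i.e.\ it relies on exactly the $P\leftrightarrow Q$ (equivalently $z_0\leftrightarrow z_1$) symmetry you make explicit, which is legitimate here because $\deg P=\deg Q$ and Hypothesis~I, the no-linear-factor hypothesis, and the Key Lemma are all symmetric under this swap. Your bookkeeping of the summation ranges and of the excluded configurations in (b) and (c) matches the statement, so the deduction from Lemma~\ref{big2} goes through as you describe.
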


\begin{defi}Let $R(z_0,z_1,z_2)=0$ be a curve of degree $\deg R$
over
$\CC$.   
Denote by $\d_R$ {\it the deficiency} of the plane curve  $R(z_0,z_1,z_2)=0$
which is
$$
\d_R=\frac 12(\deg R-1)(\deg R-2)-\frac 12\sum_{\mathfrak p}
 m_{\mathfrak p}(m_{\mathfrak p}-1)
$$
where the sum is taken over all points in   $R(z_0,z_1,z_2)=0$ and $m_{\mathfrak p}$
is the multiplicity of $R(z_0,z_1,z_2)=0$ at $\mathfrak p$.
\end{defi}

\begin{prop}\label{bkq} Let $\mathcal{C}$ be a curve in $\PP^2(\CC)$ of degree $n$.
\item (i) If  $\mathcal{C}$ has only one
 singular point and it  is ordinary of multiplicity $\mu$ which is either $n-1$ or $n-2$, then $\mathcal{C}$ is irreducible.
\item (ii) If  $\mathcal{C}$  has only two
 singular points and they are ordinary of multiplicity $n-1$ and 2
respectively, then $\mathcal{C}$ has a linear component.
\end{prop}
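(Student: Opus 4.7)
The approach is to use Bézout's theorem together with the structure of an ordinary singularity. Suppose $\mathcal{C}$ is reducible and write $\mathcal{C} = \mathcal{C}_1 \cup \mathcal{C}_2$ with $d_i = \deg \mathcal{C}_i$. At an ordinary multiple point $p$ of multiplicity $\mu$, the tangent cone of $\mathcal{C}$ factors as a product of $\mu$ distinct lines, so the tangent cones of $\mathcal{C}_1$ and $\mathcal{C}_2$ at $p$ share no common factor. This yields the key local equality
\[
I_p(\mathcal{C}_1,\mathcal{C}_2) = m_p(\mathcal{C}_1)\, m_p(\mathcal{C}_2),\qquad m_p(\mathcal{C}_1) + m_p(\mathcal{C}_2) = \mu.
\]
Throughout I will use the elementary bound $m_p(\mathcal{C}_i) \leq d_i$, with equality forcing $\mathcal{C}_i$ to be a union of $d_i$ lines through $p$.

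For (i), the only singular point of $\mathcal{C}$ forces every intersection of $\mathcal{C}_1$ and $\mathcal{C}_2$ to lie at $p$, so Bézout gives $m_1 m_2 = d_1 d_2$ with $m_i := m_p(\mathcal{C}_i)$. Since each ratio $m_i/d_i$ is at most $1$ and their product equals $1$, both ratios equal $1$, so each $\mathcal{C}_i$ is a union of lines through $p$. Then $m_p(\mathcal{C}) = d_1 + d_2 = n$, contradicting $\mu \in \{n-1,n-2\}$.

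For (ii), call the two singular points $p$ (multiplicity $n-1$) and $q$ (multiplicity $2$). First observe that $\mathcal{C}$ cannot be irreducible: the deficiency formula for an irreducible plane curve would force
\[
g(\mathcal{C}) \leq \binom{n-1}{2} - \binom{n-1}{2} - \binom{2}{2} = -1,
\]
impossible. Write $\mathcal{C} = \mathcal{C}_1 \cup \mathcal{C}_2$ and set $m_i = m_p(\mathcal{C}_i),\ m'_i = m_q(\mathcal{C}_i)$. Bézout plus the ordinary condition at both singularities gives
\[
m_1 m_2 + m'_1 m'_2 = d_1 d_2,\quad m_1 + m_2 = n-1,\quad m'_1 + m'_2 = 2.
\]
If $\{m'_1,m'_2\} = \{2,0\}$, then $I_q = 0$ and the argument of (i) applied at $p$ forces each $\mathcal{C}_i$ to be a union of lines through $p$; but then only one line of $\mathcal{C}_1$ can also pass through $q$ (uniqueness of the line through two distinct points), contradicting $m_q(\mathcal{C}_1) = 2$. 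Hence $m'_1 = m'_2 = 1$ and $m_1 m_2 = d_1 d_2 - 1$. Combined with $m_1 + m_2 = d_1 + d_2 - 1$ and $m_i \leq d_i$, after relabeling we have $m_1 = d_1 - 1,\ m_2 = d_2$; substituting yields $d_1 d_2 - d_2 = d_1 d_2 - 1$, so $d_2 = 1$ and $\mathcal{C}_2$ is a line.

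The main delicate step is justifying the identity $I_p(\mathcal{C}_1,\mathcal{C}_2) = m_p(\mathcal{C}_1)\,m_p(\mathcal{C}_2)$ at an ordinary multiple point and confirming that the tangent-cone argument is insensitive to whether $\mathcal{C}_1$ or $\mathcal{C}_2$ is itself reducible (so that the regrouping into two sub-curves of the reducible $\mathcal{C}$ is legitimate). Once that is in place, the rest is linear algebra in two non-negative integer unknowns.
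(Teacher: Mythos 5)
Your proof is correct and follows essentially the same route as the paper: B\'ezout's theorem combined with the equality $I_p=m_p(\mathcal{C}_1)m_p(\mathcal{C}_2)$ at an ordinary singularity and the bound $m_p(\mathcal{C}_i)\le d_i$ for part (i), and the negative deficiency computation to force reducibility followed by the same B\'ezout count at the two singular points for part (ii). The only cosmetic difference is that you split $\mathcal{C}$ into two arbitrary sub-curves while the paper splits off an irreducible factor, which changes nothing of substance.
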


\begin{proof}
Let  $\mathcal{C}$ be define by  $F(z_0,z_1,z_2)=0$ and 
let $H(z_0,z_1,z_2)$ be  its proper irreducible factor
 of degree $d$. Then  $F(z_0,z_1,z_2)=H(z_0,z_1,z_2)G(z_0,z_1,z_2)$ for some $G\in \CC[z_0,z_1,z_2]$ and $1\le d< n.$
Clearly,
$G(z_0,z_1,z_2)$ is not divisible by $H(z_0,z_1,z_2)$ because
$F(z_0,z_1,z_2)=0$ has only finitely many singular points. 

(i)
Let $m_H$ be the
multiplicity of the singular point in $H(z_0,z_1,z_2)=0.$
By Bezout's theorem we have
\begin{align}\label{1}
d(n-d)=m_H(\mu-m_H).
\end{align}
Since the multiplicity of the point in the intersection
of these two curves is not bigger than the degree of each curve, it follows that
$$m_H\leq d \,\,{\rm and}\,\,\mu-m_H\leq n-d.$$
Hence, $m_H\leq d\leq n-\mu+m_H,$ where $\mu$ is either $n-1$ or $n-2$. This is impossible if   $1\le d< n$.
Hence,  $F(z_0,z_1, z_2)$ is irreducible, and we are done for (i).

(ii) In this case, the curve has deficiency $\delta_{\mathcal{C}}=\displaystyle\frac{p_1(p_1+1)}2-\frac{p_1(p_1+1)}2-1<0$. Therefore, the curve is reducible and using the above argument for the case $\mu=n-1$, the curve  $H(z_0,z_1,z_2)=0$ has to pass through both of the singular points.
By Bezout's theorem, 
$$(n-d)d=m_H(n-1-m_H)+1,
$$
from which it follows that $d=1$.
Therefore the curve has a linear component, and we are done for (ii).
\end{proof}

The following lemma is a special case of \cite[proposition 6]{AW}.
 For the convenience of the readers, we will recall  here a brief proof.
\begin{lemma}\label{quadratic} 
Let the curve $C=\{F(z_0,z_1,z_2)=0\}$  have only one singular point, say $(\alpha_1,\beta_{\t(1)},1)$ such that  $p_1=3$ and $q_{\t(1)}=1$.  Then the curve $C$ is birational to a
curve
$R(z_0,z_1,z_2)=0$ with only ordinary singularities.  Furthermore,
$$
\d_R= \d_{C}-1. 
$$
\end{lemma}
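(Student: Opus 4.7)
The plan is to exploit the birational invariance of the geometric genus: once I identify the singular point $\mathfrak{p}_{1}=(\alpha_{1},\beta_{\tau(1)},1)$ as an $A_{3}$-tacnode with local $\delta$-invariant equal to $2$, the identity $\delta_{R}=\delta_{C}-1$ is almost automatic from the existence of a plane model with only ordinary singularities.

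First I would analyze the local structure of $C$ at $\mathfrak{p}_{1}$. Setting $x=z_{0}-\alpha_{1}$, $y=z_{1}-\beta_{\tau(1)}$ and working in the affine chart $z_{2}=1$, the identity $F=P(z_{0})-Q(z_{1})$, together with the expansion recalled in the proof of Lemma~\ref{order}(ii) and the facts $p_{1}=3$, $q_{\tau(1)}=1$, $P^{(4)}(\alpha_{1})\neq 0$, $Q''(\beta_{\tau(1)})\neq 0$, give
$$
F(z_{0},z_{1},1)=\nu\,x^{4}+\mu\,y^{2}+\bigl(\text{higher-order terms}\bigr),
$$
with $\mu,\nu\in\CC^{\times}$. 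Solving for $y$ as a power series in $x$ by Weierstrass preparation produces two distinct smooth branches
$$
y=\pm\sqrt{-\nu/\mu}\,x^{2}+O(x^{3})
$$
sharing a common tangent and meeting with intersection multiplicity $2$. Thus $\mathfrak{p}_{1}$ is an $A_{3}$-tacnode, whose local $\delta$-invariant is $2$, whereas the paper's deficiency formula only records the naive multiplicity contribution $\binom{2}{2}=1$.

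Next I would invoke Noether's classical theorem: every plane curve admits, after a suitable sequence of quadratic Cremona transformations, a birational plane model whose singularities are all ordinary. Applied to $C$, this produces the desired $R(z_{0},z_{1},z_{2})=0$.

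Finally I would close with the genus computation. The geometric genus of a plane curve is its arithmetic genus minus the sum of the local $\delta$-invariants at its singular points; since $\mathfrak{p}_{1}$ is the only singular point of $C$,
$$
p_{g}(C)=\tfrac{1}{2}(n-1)(n-2)-2=\delta_{C}-1.
$$
For $R$ all singularities are ordinary, so the local $\delta$-invariant of each equals $\binom{m}{2}$, and hence $\delta_{R}=p_{g}(R)$. Birational invariance of $p_{g}$ then yields
$$
\delta_{R}=p_{g}(R)=p_{g}(C)=\delta_{C}-1.
$$

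The main obstacle will be Step~1: confirming that the singularity at $\mathfrak{p}_{1}$ is analytically isomorphic to $y^{2}=x^{4}$ rather than a deeper degeneration, so that the local $\delta$-invariant equals $2$ exactly. This reduces to the Weierstrass-preparation computation sketched above, after which the remainder of the argument is standard book-keeping with Plücker-type formulas and Noether's resolution theorem.
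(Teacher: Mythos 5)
Your argument is correct in substance but takes a genuinely different route from the paper's. You identify the singularity $\mathfrak p_1$ as an $A_3$ tacnode with local $\delta$-invariant $2$ (versus the contribution $\binom{2}{2}=1$ recorded by the paper's deficiency), then invoke Noether's resolution theorem and the birational invariance of the geometric genus to get $\delta_R=p_g(R)=p_g(C)=\delta_C-1$ for any ordinary plane model $R$. The paper instead moves $\mathfrak p_1$ to the origin, applies one explicit quadratic Cremona transformation $(z_0,z_1,z_2)\mapsto(z_1z_2,z_0z_2,z_0z_1)$, exhibits the resulting curve $R_1$ concretely, and reads off $\delta_{R_1}=\delta_C-1$ directly from the multiplicities of the three fundamental points and the single new ordinary double point at $(1,-1,0)$. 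Your approach is more conceptual and explains why the correction is exactly $1$ (it is $\delta_{\mathfrak p_1}-\binom{2}{2}=2-1$); the paper's is more elementary, produces the birational model explicitly, and works purely with multiplicities. Two points to tighten in your version: (i) the identities $p_g(C)=\tfrac12(n-1)(n-2)-2$ and $\delta_R=p_g(R)$ require $C$ (hence $R$) to be irreducible, a hypothesis the lemma does not state; it does hold where the lemma is applied (case (3) of Lemma~\ref{dkd} assumes irreducibility first), but you should either add it or argue combinatorially as the paper does. (ii) The Weierstrass step you flag as the main obstacle is in fact immediate here: since $F=P(z_0)-Q(z_1)$ has no mixed terms, the local equation is $\mu y^2+(\text{higher powers of }y)+\nu x^4+(\text{higher powers of }x)$ with $\mu\nu\ne0$, and completing the square in $y$ gives $\mu\tilde y^{\,2}+\nu x^4(1+O(x))$, so the singularity is exactly $A_3$ and $\delta_{\mathfrak p_1}=2$; this short computation should be written out rather than deferred.
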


\begin{proof}We first make a linear transformation which takes the curve
 to an excellent position, and the point $(\a_i,\beta_{t(i)},1)$ to the origin.  Let
\begin{eqnarray*}
&&R_{01}(z_0,z_1,z_2)\\
&=&F(z_0+\alpha_1z_2,z_0+z_1+\beta_{t(1)}z_2,z_2)\\
&=&\nu_1z_0^{4}z_2^{n-4}+ \nu_{2}z_0^{5}z_2^{n-5}
+\dots+z_0^n\\
&&\quad+\mu_1(z_0+z_1)^{2}z_2^{n-2}+\mu_{2}(z_0+z_1)^{3}z_2^{n-3}
+\dots-c(z_0+z_1)^n
\end{eqnarray*}
where $\nu_{i}$'s and  $\mu_{i}$'s are constant. 
We then perform a quadratic transformation 
\begin{eqnarray*}
&&R_{01}(z_1z_2,z_0z_2,z_0z_1)\\
&=&\nu_1z_2^4z_1^nz_0^{n-4}+\nu_{2}z_2^5z_1^{n}z_0^{n-5}+\dots+z_2^nz_1^n\\
&&+\mu_i\mu_1z_2^{2}(z_0+z_1)^{2}(z_0z_1)^{n-2}+\mu_{2}z_2^3(z_0+z_1)^{3}(z_0z_1)^{n-3}
+\dots-cz_2^n(z_0+z_1)^n\\
&=& z_2^2R_{1}(z_0,z_1,z_2),
\end{eqnarray*}
where 
\begin{align*}&R_{1}(z_0,z_1,z_2)=\nu_1z_2^2z_1^nz_0^{n-4}+\nu_{2}z_2^3z_1^{n}z_0^{n-5}+\dots+z_2^{n-2}z_1^n+\\
&\quad\quad+\mu_1(z_0+z_1)^{2}(z_0z_1)^{n-2}+\mu_{2}z_2(z_0+z_1)^{3}(z_0z_1)^{n-3}
+\dots-cz_2^{n-2}(z_0+z_1)^n.
\end{align*}
For points of $F(z_0,z_1,z_2)=0$ outside of the union of 
the 3 exceptional lines  
$\{z_0=0\}$, $\{z_1=0\}$, and $\{z_2=0\}$,
these transformations preserve the  multiplicities and ordinary multiple
points. It is easy to see that the 3 fundamental points (1,0,0), (0,1,0), and
(0,0,1) become ordinary multiple points of 
$R_{1}(z_0,z_1,z_2)=0$ with multiplicities  $n-m_i-1$, $n-m_i-1$, and $n$
respectively.  We also check that the only non-fundamental point  in the
intersection of
$R_{1}(X,Y,Z)$ with the union of three exceptional lines  is $\mathfrak
q_1=(1,-1,0).$  Since
\begin{align*}R_{1}(1,z_1,z_2)=&\nu_1z_2^2z_1^n+\dots+z_2^{n-2}z_1^n+\mu_1(1+z_1)^{2}z_1^{n-2}
+\dots-cz_2^{n-2}(1+z_1)^n,
\end{align*}
the point $\mathfrak q_{1}$ is an ordinary multiple point of multiplicity 2 and
$\d_{R_1}=\d_{F_c}-1$.
\end{proof}

\begin{lemma} \label{dkd}Assume that $P$ and $Q$ are polynomials satisfying  Hypothesis I and $\deg P=\deg Q$. If one of the following holds, then the curve $C$ has an  irreducible component of genus  0 or 1.
\begin{enumerate}
\item $P(X)-Q(Y)$ has a linear factor.
\item $n=2$ or $n=3$.
\item $n=4$ and either there exists at least two $i$ such that $P(\alpha_i)=Q(\beta_{\t(i)})$  or  there exists only one $i$ such that $P(\alpha_i)=Q(\beta_{\t(i)})$ and $|p_i-q_{\t(i)}|=2$.
%\item $n=4$,  $P(\alpha_1)=Q(\beta_1)$ and either  $p_1=1, q_{\t(1)}=3$ or $p_1=3, q_{\t(1)}=1$
%\item $n=4$,  $P(\alpha_1)=Q(\beta_1)$,  $P(\alpha_2)=Q(\beta_2)$ and  $p_1=p_2=p_3=1$.
%\item $n=4$, $l_0=l=h=2$, $p_2=q_1=1$,$q_2=p_1= 2$, $P(\alpha_i)=Q(\beta_i)$, for $i=1,2$. 
 % \item  $n=4$, $l=h=3$, $p_i=q_i=1$ for any $i=1,2,3$ and  $P(\alpha_i)=Q(\beta_i)$ for any   $i=1,2,3$.
  \item either $n=p_1+1$, $l=1$, $h=2$, $p_1=q_1+1$, $q_2=1$ and 
$P(\alpha_1)=Q(\beta_1)$; or $n=p_1+2$, $h=1$, $l=2$, $q_1=p_1+1$, $p_2=1$ and
$P(\alpha_1)=Q(\beta_1)$.
  \item $l=h=2$, $p_2=q_2=1$, $p_1=q_1$, $n=p_1+2$, and $P(\alpha_1)=Q(\beta_1)$.
%\item $l_0=l=h=2$, $p_2=q_2=1$, $p_1=q_1$. (has linear factor)
\item $n=5$, $l_0=l=h=3$, $p_3=p_2=q_2=q_3=1$,$p_1=q_{\t(1)}= 2$, $P(\alpha_i)=Q(\beta_{\t(i)})$, for $i=1,2,3$. 
\item $n=5$, $l_0=l=h=2$, $p_i=q_i=2$, $P(\alpha_i)=Q(\beta_{\t(i)})$, for $i=1,2$.
\end{enumerate}
\end{lemma}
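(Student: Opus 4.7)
The plan is to dispose of the seven configurations one by one, producing in each an irreducible component of geometric genus $0$ or $1$. Throughout, Lemma \ref{singular} (together with Lemma \ref{(i,j)}, since $P$ and $Q$ satisfy Hypothesis I) identifies the affine singularities of $C$ as exactly the points $\mathfrak p_i=(\alpha_i,\beta_{\t(i)},1)$ with $P(\alpha_i)=Q(\beta_{\t(i)})$, and the local expansion
\[
F \sim \nu(z_0-\alpha_iz_2)^{p_i+1}+\mu(z_1-\beta_{\t(i)}z_2)^{q_{\t(i)}+1}+\cdots
\]
obtained in the proof of Lemma \ref{order}(ii) reads off the multiplicity of $C$ at $\mathfrak p_i$ as $\min\{p_i+1,\,q_{\t(i)}+1\}$; because $n=m$ Lemma \ref{singular} also rules out singularities at infinity. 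This is the only geometric information I will need.

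Cases (1) and (2) are immediate: a linear factor is a genus-$0$ component, and for $n\le 3$ the arithmetic genus $(n-1)(n-2)/2$ is at most $1$, while any proper component in the reducible sub-case of $n=3$ has degree $\le 2$ hence genus $0$. For the remaining cases the strategy is to apply the deficiency formula
\[
g(\widetilde C)=\delta_C=\frac{(n-1)(n-2)}{2}-\sum_{\mathfrak p}\frac{m_{\mathfrak p}(m_{\mathfrak p}-1)}{2},
\]
valid once all singularities have been made ordinary, and to invoke Proposition \ref{bkq} when irreducibility has to be confirmed and Lemma \ref{quadratic} (or its obvious iteration) when a singularity is non-ordinary, each application of Lemma \ref{quadratic} lowering the deficiency by exactly one. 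Concretely: in case (3) the arithmetic genus is $3$, and either (a) two singular points of multiplicity $\ge 2$ absorb at least $2$ of it, or (b) a single singular point with $|p_i-q_{\t(i)}|=2$ has non-ordinary tangent cone and Lemma \ref{quadratic} supplies the second drop, so $g\le 1$ in either subcase. In cases (4) and (5) the unique affine singularity has multiplicity $p_1+1=n-1$ or $q_1+1=n-1$, so Proposition \ref{bkq}(i) forces irreducibility, while a numerical count (after one or two applications of Lemma \ref{quadratic} to clear any non-ordinary behaviour) gives $\delta=0$. Cases (6) and (7) are finite checks: the tabulated ordinary multiplicities ($3$ or $2$) produce $\delta_C\le 1$ directly, and if reducibility is a priori possible it can only separate off a linear factor, which is detected by Proposition \ref{bkq}(ii) or by inspection of the configuration.

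The main obstacle is the handling of non-ordinary singularities that arise in cases (3b) and (4), where the naive contribution $m(m-1)/2$ undercounts the true genus drop and one must iterate the quadratic transformation of Lemma \ref{quadratic} until the transformed curve has only ordinary singularities, carefully recording that each blow-up reduces $\delta$ by exactly one. Once this bookkeeping is established, every item on the list collapses to a short arithmetic verification of the displayed deficiency formula against the explicit multiplicities prescribed by the hypotheses.
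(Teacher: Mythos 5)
Your overall strategy (read off the multiplicities from the local expansion, use the deficiency formula, and confirm irreducibility where needed) matches the paper's, and your treatments of cases (1), (2), (3), (5), (6), (7) are essentially the paper's arguments. But case (4) contains a genuine gap. There the unique singular point $(\alpha_1,\beta_1,1)$ has local lowest-order part $\nu(z_0-\alpha_1z_2)^{n}+\mu(z_1-\beta_1z_2)^{n-1}$ (since $p_1+1=n$ and $q_1+1=p_1=n-1$), so its tangent cone is the single line $z_1-\beta_1z_2=0$ counted $n-1$ times: the point has multiplicity $n-1$ but is as far from \emph{ordinary} as possible. Proposition~\ref{bkq}(i) explicitly assumes the singularity is ordinary, and its Bezout argument needs that hypothesis (for a non-ordinary point one only gets $d(n-d)\ge m_H(\mu-m_H)$, which yields no contradiction), so you cannot invoke it to force irreducibility here. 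The paper instead proves irreducibility in case (4) directly, by writing $P(x)-Q(y)=u_1(x-\alpha_1)^{n}-v_1(y-\beta_1)^{n}-v_2(y-\beta_1)^{n-1}$ and checking this cannot factor; once irreducibility is known, the naive count $\delta_C=\frac12(n-1)(n-2)-\frac12(n-1)(n-2)=0$ already bounds the genus by $0$, with no resolution of the singularity required.

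A related soft spot is your plan to ``iterate'' Lemma~\ref{quadratic}, ``each application lowering the deficiency by exactly one.'' That lemma is proved only for the specific configuration $p_i=3$, $q_{\t(i)}=1$ (which is exactly what case (3) needs), and the claim that each blow-up of a general non-ordinary singularity drops $\delta$ by exactly one is false in general: the drop depends on the multiplicities of the infinitely near points. Fortunately you never actually need such an iteration --- in case (4) the naive deficiency is already $0$ --- but as written this step is both unjustified and misleading. Finally, in cases (3), (6), (7) the clean way to dispose of reducibility is the paper's remark that a reducible curve of degree $4$ or $5$ must have a component of degree at most $2$, hence of genus $0$; your appeal to Proposition~\ref{bkq}(ii) ``or inspection'' is vaguer than necessary.
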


\begin{proof}

For cases (1) and (2), the curve clearly has a  component of genus 0 or 1.

In case (3),  because the curve $C$ has degree $n=4$, if it is reducible
then  it has either a linear component  or a quadratic factor,
which has genus 0.  Assume that $C$ irreducible.  If there exists at
least two $i$ such that $P(\alpha_i)=Q(\beta_{\t(i)})$  then  the curve
has at least two singular points and  its genus is at most
$\displaystyle\frac{(4-1)(4-2)}2 -2=1.$ 
If  there exists only one $i$ such that $P(\alpha_i)=Q(\beta_{\t(i)})$
and $|p_i-q_{\t(i)}|=2,$ then by Lemma~\ref{quadratic},
the curve is birational  to a curve of
genus $\delta_C-1= \frac{(4-1)(4-2)}2-1-1=1.$

In case (4), the curve $C$ of degree $n$ has only one singular point
$(\alpha_1, \beta_{\t(1)}, 1)$  of multiplicity $n-1$.
Its deficiency $\displaystyle \delta_C=\frac{1}{2}(n-1)(n-2)-\frac{1}{2}(n-1)(n-2)=0.$ On the other hand, locally near the singular point, one can write
$$P(z_0)-Q(z_1) =u_1(z_0-\alpha_1)^{n}-v_1(y-\beta_{\t(1)})^{n}-v_2(y-\beta_{\t(1)})^{n-1}$$
which is easily seen to be irreducible.
Therefore, the curve $C$ has genus zero.

In case (5),   by Proposition~\ref{bkq}, the curve $C$ is irreducible.
Hence, we have the deficiency of the curve $C$ is its genus which is  $\gen_C=\delta_C=\frac{(n-1)(n-2)}2-\frac{(p_1+1)(p_1)}2=0$. 

For cases (6) and (7),  because the curve $C$ has degree $n=5$, if it is
reducible, then  it has either a linear factor  or a quadratic factor,
which therefore has genus 0. Assume that $C$ irreducible.
In case (6),  the curve has 3 singular points which are all ordinary,
so its genus is
$\frac{(5-1)(5-2)}2 - \frac{3(3-1)}2-\frac{2(2-1)}2-\frac{2(2-1)}2=1$.
In case (7),  the curve has 2 singular points which are all ordinary
of multiplicity 3. So its genus is
$\frac{(5-1)(5-2)}2 - 2.\frac{3(3-1)}2=0$. 
\end{proof}

\begin{proof}[{Proof of Theorem  \ref{th3}.}] By Lemma~\ref{dkd}, if the polynomials $P$ and $Q$ satisfy one of the cases (1) ,...,(7),
then the curve $C$ has an irreducible component of genus 0 or 1.
We now assume they do not fall into any of the cases (1) ,...,(7).
Since  $P(x)$ and $Q(x)$ are not linear polynomials, we can  assume
both $l$ and $h$ are not zero.

We will consider the following cases.

\medskip
  \noindent{\it Case 1.}\quad $l_0=0.$
\smallskip

In this case, the curve $C$ does not have any singular points. Therefore,
 it is irreducible of genus $\gen_C=\frac{1}{2}(n-1)(n-2)\ge 0$ if $n\ge 4$.

\medskip
  \noindent{\it Case 2.}\quad $l_0=1.$
\smallskip

If $l_0=1,$  and either  $p_1=1, q_{\t(1)}=3$ and  $\displaystyle\sum_{i=l_0+1}^l{p_i}=2,$ or   $p_1=3, q_{\t(1)}=1$ and  $\displaystyle\sum_{i=l_0+1}^l{q_{\t(i)}}=2,$ then $n=4$. This is the exceptional case 3. By Theorem~\ref{th1} and  Lemma~\ref{big2}~(b), we only have to consider when  $l\le  l_0+1=2$ and $ p_{j}= 1$  for all $j=2,...,l$. Similarly, $h\le 2$ and $ q_{i}= 1 $ for all $i\ne \t(1)$. 
Therefore,  the remaining cases are
\begin{align}\label{dk1}&|p_1-q_{\t(1)}|\le2, \max(l,h)\le l_0+1=2, q_{i}= 1 \text{ for all $i\ne \t(1)$,}\cr
&\text{ and } p_{j}= 1 \text{ for all $j=2,...,l$}.
\end{align}

We will consider the following sub-cases.

\medskip
\noi{\it Subcase 1.}\quad  $l=1$ and $h=1$.
\smallskip

In this case $P(x)-Q(y)=(x-\alpha_1)^{n}-(y-\beta_1)^{n}$ has linear factors, this is the exceptional case 1.

\medskip
\noi{\it Subcase 2.}\quad  $l=1$ and $h=2$  (or $l=2$ and $h=1$).
\smallskip

Since $n=m$, it follows that
$p_1=q_{\t(1)}+q_{i, i\ne \t(1)}.$ By the condition (\ref{dk1}), $q_{i}=1$  for  $i\ne \t(1)$, we have $n-1=p_1=q_{\t(1)}+1.$ This is the exceptional case 4.
%Hence, the curve $C$ has only one singular point $(\alpha_1, \beta_{\t(1)}, 1)$  of multiplicity $p_1$.  Then its deficiency $\displaystyle \delta_C=\frac{1}{2}p_1(p_1-1)-\frac{1}{2}p_1(p_1-1)=0.$ On the other hand, at locally of the singular point one can write
%$$P(X)-Q(Y) =u_1(z_0-\alpha_1z_2)^{n}-v_1(y-\beta_1)^{n}-v_2(y-\beta_1)^{n-1}$$
%which is easy to see that it is irreducible. Therefore, the curve $C$ has genus zero.

Similarly,  $l=2$,$h=1$, $q_{\t(1)}=p_1+1$ and $p_2=1$ is the exceptional case 4.

\medskip
\noi{\it Subcase 3.}\quad  $l=2$ and $h=2$.
\smallskip

 By Theorem~\ref{th1}, we may assume that $|p_1-q_{\t(1)}|\le 1.$ However, by the assumption $n=m$, we have
$n-1=p_1+p_2=q_{\t(1)}+q_{i, i\ne \t(1)}$. Since $p_2=q_{i, i\ne \t(1)}=1$ by (\ref{dk1}), 
 we have $p_1=q_{\t(1)}$ and $n=p_1+2$. This is the exceptional case 5.
% Hence,  the curve $C$ is of degree $p_1+2$ and has only one singular point which is ordinary of multiplicity $p_1+1$. We have $$\delta_C=\frac{1}{2}(p_1+1)p_1-\frac{1}{2}(p_1+1)p_1=0.$$ On the other hand, by Lemma \ref{bkq}, the curve $C$ is irreducible. Hence, $\gen_C=\delta_C=0.$

\medskip
\noi{\it Case 3.}\quad  $l_0\ge 2$. 
\smallskip

If $l_0=2$, $l=l_0+1=3$ and $p_{l_0+1}=1$, and $p_1=p_2=1$ then $n=4$, 
and this is the exceptional case 3. 

By Lemma \ref{big2},  we only have to consider   $|p_i-q_{\t(i)}|\le 1,$ for every $i=1,...,l_0$ and $l=l_0$, (and, similarly, $h=l_0$).

 Without loss of generality, assume that $p_1\ge p_2\ge\dots\ge p_{l_0}$. 
Take
\begin{align*}\omega&=\frac{L_{12}^{3}W(z_1,z_2)}{(z_0-\alpha_{1}z_2)^{3}(z_0-\alpha_{2}z_2)^{3}}.\end{align*}
Hence, by Lemma~\ref{regular}(iv,a), $\omega$ is regular, from which 
it follows that
$$\omega_1=z_0\omega,\ \text{ and }  \omega_2=z_1\omega$$
are two regular well-defined 1-forms. Because the curve $C$ does not have any linear components, they are $\CC$-linearly independent on every irreducible
component. However, if $p_2\ge3$ then the denominator of $\omega$ is a factor of $\displaystyle\frac{\partial F}{\partial z_0}$. So, all of the conditions in the Key Lemma are satisfied.

All together, the remaining cases are $l=h=l_0$, $|p_i-q_{\t(i)}|\le 1,$ for every $i=1,...,l_0$ and either $p_2=1$ or $p_2=2.$ We  have followings:
 
 \medskip
\noi
{\it Subcase 1.}  $p_2=1$ and  $l_0=2$.
\smallskip

Since $p_2=1$, we have  $q_{\t(2)}\le p_2+1=2$.

 If  $q_{\t(2)}=1$ then, by $n=m$,  $p_1=q_{\t(1)},$
and  the curve has degree $p_1+2$ and has two singular points
of multiplicity 2 and $p_1+1,$ all of which are ordinary.
By Proposition~\ref{bkq}(ii), the curve  has a linear factor,
which is exceptional case 1.

If  $q_{\t(2)}=2,$ then $p_1=q_{\t(1)}+1\ge 2$. If $p_1=2,$
then this is exceptional case 3. If $p_1\ge 3,$ then we consider
 \begin{align*}
&\gamma_{1,1}=\frac{L_{12}W(z_1,z_2)}{(z_0-\alpha_1z_2)^{2}(z_0-\alpha_2z_2)},\\
&\gamma_{1,2}=\frac{L_{12}^{2}W(z_1,z_2)}{(z_0-\alpha_1z_2)^{3}(z_0-\alpha_2z_2)},\end{align*}
which are regular 1-forms by Lemma~\ref{regular}, which are
$\CC$-linearly independent
because  the curve does not have any  linear components, and are
such that their denominators are  factors of
$\displaystyle\frac{\partial F}{\partial z_0}$ by  $p_1\ge 3$.
So, all of the conditions in the Key Lemma are satisfied.

 \medskip
\noi
{\it Subcase 2.}  $p_2=1$ and   $l_0=3$.
\smallskip

If  $p_1=1,$ then $p_i=q_{\t(i)}=1$ for all $i=1,2,3$ (because we assumed $p_1\ge p_2\ge...\ge p_{l_0}$). This is  the exceptional case 3.

If  $p_1\ge 2$, then either $q_{\t(1)}=p_1,$ or $p_1=q_{\t(1)}+1$ (by $l=h=l_0$). Consider two well-defined 1-forms
\begin{align*}&\gamma_{2,1}=\frac{L_{12}L_{13}W(z_1,z_2)}{(z_0-\alpha_1z_2)^{2}(z_0-\alpha_2z_2)(x-\alpha_3z_2)},\\
&\gamma_{2,2}=\begin{cases}\displaystyle\frac{L_{12}L_{23}W(z_1,z_2)}{(z_0-\alpha_1z_2)^{2}(z_0-\alpha_2z_2)(x-\alpha_3z_2)}& \text{ if $p_1=2$ and $q_1=1$}\\
\displaystyle\frac{L_{12}L_{13}^2W(z_1,z_2)}{(z_0-\alpha_1z_2)^{3}(z_0-\alpha_2z_2)(x-\alpha_3z_2)} &\text{ if } p_1\ge 3. 
\end{cases}.\end{align*}
They are regular by Lemma~\ref{regular} and nontrivial on every irreducible
component of the curve $C$ because $C$ does not have any linear components.
However, if they are $\CC$-linearly dependent, then the curve $C$
has a quadric factor, which is impossible since we can construct at least
one regular nontrivial 1-form $\gamma_{2,1}$. The condition on $p_i$
ensures that condition (i) of the Key Lemma is satisfied. Therefore,
all the conditions in the Key Lemma are satisfied.

\noindent If $p_1=q_{\t(1)}=2,$ then it is exceptional case 6.
% (because in this case $n=5$, $\delta=6-1-3-1=1.$ If the curve is irreducible then it have genus less than or equal 1. If the curve is reducible then either it has a linear factor which is exceptional case or it has a quadratic factor which is impossible since we can construct one nontrivial regular 1-form.)

 \medskip
\noi
{\it Subcase 3.}  $p_2=1$, $l_0\ge4.$
\smallskip

In this case, we consider 
\begin{align*}
&\gamma_{3,1}=\frac{L_{12}L_{34}W(z_1,z_2)}{(z_0-\alpha_1z_2)(z_0-\alpha_2z_2)(x-\alpha_3z_2)(x-\alpha_4z_2)},\\
&\gamma_{3,1}=\frac{L_{13}L_{24}W(z_1,z_2)}{(z_0-\alpha_1z_2)(z_0-\alpha_2z_2)(x-\alpha_3z_2)(x-\alpha_4z_2)},\end{align*}
and it is 
easy to see they satisfy all the conditions in the Key Lemma.

 \medskip
\noi
{\it Subcase 4.}  $p_2=2$.
\smallskip

In this case, $p_1\ge,2$ and using same arguments  as above, we can show
the following 1-forms  satisfy all the conditions in the Key Lemma:
\begin{align*}&\gamma_{4,1}=\frac{L_{12}^2W(z_1,z_2)}{(z_0-\alpha_1z_2)^2(z_0-\alpha_2z_2)^2},\\
&\gamma_{4,2}=\begin{cases}\displaystyle\frac{L_{12}^2L_{13}W(z_1,z_2)}{(z_0-\alpha_1z_2)^2(z_0-\alpha_2z_2)^2(z_0-\alpha_3z_2)}& \text{ if $l_0\ge 3$}\\
\displaystyle\frac{L_{12}^3W(z_1,z_2)}{(z_0-\alpha_1z_2)^3(z_0-\alpha_2z_2)^2} &\text{ if  $l_0=2$ and $p_1\ge 3$}. 
\end{cases}.\end{align*}

If $l_0=2$ and $p_1=2,$ then $n=5$ and either
$q_{\t(1)}=q_{\t(2)}=2,$ or $q_{\t(1)}=3$ and $q_{\t(2)}=1$.
When $q_{\t(1)}=3$ and $q_{\t(2)}=1$, we work similarly to the subcase 1
 when $p_2=1$, $p_1\ge3$ and $q_{\t(1)}=q_{\t(2)}=2$, which means
we can construct two regular 1-forms $\gamma_{1,1}$ and $\gamma_{1,2}$.
When $q_{\t(1)}=q_{\t(2)}=2$, it is the exceptional case 7.
\end{proof}

\end{document}